\newcommand{\EE}{\mathbb{E}}
\newcommand{\NN}{\mathbb{N}}
\newcommand{\PP}{\mathbb{P}}
\newcommand{\RR}{\mathbb{R}}
\newcommand{\ZZ}{\mathbb{Z}}
\newcommand{\rd}{{\rm d}}
\newcommand{\bA}{\mathbf{A}}
\newcommand{\bV}{\mathbf{V}}
\newcommand{\bW}{\mathbf{W}}
\newcommand{\bw}{\mathbf{w}}
\newcommand{\bY}{\mathbf{Y}}
\newtheorem{theorem}{Theorem}
\newtheorem{proposition}[theorem]{Proposition}
\newtheorem{corollary}[theorem]{Corollary}
\newdefinition{remark}{Remark}
\journal{}
\def\ps@pprintTitle{%
 \let\@oddhead\@empty
 \let\@evenhead\@empty
 \def\@oddfoot{\centerline{\thepage}}%
 \let\@evenfoot\@oddfoot}
\begin{document}

\begin{frontmatter}

\title{Equivalent Representations of Max-Stable Processes via $\ell^p$ Norms}
\author[mo]{Marco Oesting} \ead{oesting@mathematik.uni-siegen.de}
\address{Universit\"at Siegen, Department Mathematik, D-57068 Siegen, Germany}

\begin{abstract}
While max-stable processes are typically written as pointwise maxima over an infinite number 
of stochastic processes, in this paper, we consider a family of representations based on 
$\ell^p$ norms. This family includes both the construction of the Reich--Shaby \citep{reich-shaby-12}
model and the classical spectral representation by \citet{dehaan84} as special cases. 
As the representation of a max-stable process is not unique, we present formulae to switch
between different equivalent representations. We further provide a necessary and sufficient 
condition for the existence of a $\ell^p$ norm based representation in terms of the stable 
tail dependence function of a max-stable process. Finally, we discuss several properties of
the represented processes such as ergodicity or mixing.
\end{abstract}

\begin{keyword}
 extreme value theory \sep Reich--Shaby model \sep spectral representation \sep stable tail dependence function
\end{keyword}

\end{frontmatter}

\section{Introduction}

Arising as limits of rescaled maxima of stochastic processes, max-stable
processes play an important role in spatial and spatio-temporal extremes. 
A stochastic process $X=\{X(s),\,s \in S\}$ on a countable index set $S$ is called 
max-stable if there exist sequences $\{a_n(\cdot)\}_{n \in \NN}$ and 
$\{b_n(\cdot)\}_{n \in \NN}$ of functions $a_n: S \to (0,\infty]$ and 
$b_n: S \to \RR$ such that, for all $n \in \NN$,
$$\mathcal{L}(X) = \mathcal{L}\left(\max_{i=1}^n \frac {X_i - b_n}{a_n}\right),$$
where $X_i$, $i \in \NN$, are independent copies of $X$ and the maximum is
taken pointwise. From univariate extreme value theory, it is well-known that
the marginal distributions of $X$, if non-degenerate, are necessarily 
Generalized Extreme Value (GEV) distributions, i.e.\
$$\PP(X(s) \leq x) = \exp\left(-\left(1 + \xi(s) \frac{x-\mu(s)}{\sigma(s)}\right)^{-1/\xi(s)}\right), \quad 1 + \xi(s) \frac{x-\mu(s)}{\sigma(s)} > 0,$$
with $\xi(s) \in \RR$, $\mu(s) \in \RR$ and $\sigma(s)>0$. As max-stability is
preserved by marginal transformations, it is common practice in extreme value
theory to consider only one type of marginal distributions, e.g.\ the case 
that the shape parameter $\xi$ is positive. In this case, the marginal 
distributions are of $\alpha$-Fr\'echet type, i.e., up to affine 
transformations, the marginal distribution functions are of the form 
$$\Phi_\alpha(x) = \exp\left(-x^\alpha\right), \quad x>0,$$
for some $\alpha >0$. Here, we will focus on the case of max-stable processes
with unit Fr\'echet margins, i.e.\ $X(s) \sim \Phi_1$ for all $s \in S$. In this
case, $X$ is called a simple max-stable process. 
\medskip

By \citet{dehaan84}, the class of simple max-stable processes on $S$ 
can be fully characterized: A stochastic process $\{X(s), \, s \in S\}$ is 
simple max-stable if and only if it possesses the spectral representation 
\begin{equation} \label{eq:spec-repr}
  X(s) = \max_{i \in \NN} A_i V_i(s), \quad s \in S,
\end{equation}
where $\sum_{i\in\NN} \delta_{A_i}$ is a Poisson point process on $(0,\infty)$ 
with intensity measure $a^{-2}\rd a$ and $V_i=\{V_i(s),\,s\in S\}$ are 
independent copies of a stochastic process $V$ such that $\EE(V(s))=1$ for all
$s \in S$ \citep[see also][]{GHV90,penrose92}. It is important to note that 
this representation is not unique. As different representations of the same
max-stable process might be convenient for different purposes such as 
estimation \citep[see][among others]{EMOS14, EMKS15} or simulation 
\citep[cf.][for instance]{OKS12,DM15,OSZ13}, finding novel representations is 
of interest. 
\medskip

Recently, \citet{reich-shaby-12} came up with a class of max-stable
processes written as a product
\begin{equation} \label{eq:reich-shaby}
 X(s) = U^{(p)}(s) \cdot \left[\sum_{l=1}^L B_l w_l(s)^p\right]^{1/p}, \qquad s \in S,
\end{equation}
where $\{U^{(p)}(s)\}_{s \in S}$ is a noise process with $U^{(p)}(s) \sim_{iid} \Phi_p$,
the functions $w_l: S \to [0,\infty)$, $l=1,\ldots,L$, are deterministic weight
functions such that $\sum_{l=1}^L w_l(s)=1$ for all $s \in S$ and, 
independently from $\{U^{(p)}(s)\}_{s \in S}$, the independent random variables
$B_l$, $l=1,\ldots,L$, follow a stable law given by the Laplace transform
$$ \EE\{\exp(-t \cdot B_l)\} = \exp(-t^{-1/p}), \qquad t>0.$$
The parameter $p \in (1,\infty)$ determines the strength of the effect of the
noise process which -- analogously to the terminology in geostatistics -- is 
also called a nugget effect. In \citet{reich-shaby-12}, the weight functions
$w_l$ are chosen as shifted and appropriately rescaled Gaussian density 
functions yielding an approximation of the well-known Gaussian extreme value
process \citep{smith90} joined with a nugget effect. Similarly, 
\citet{reich-shaby-12} propose analogues to popular max-stable processes such as
extremal Gaussian processes \citep{schlather02} and Brown-Resnick processes 
\citep{KSH09} by choosing appropriately rescaled realizations of Gaussian and 
log-Gaussian processes, respectively, as weight functions. Due to the flexibility
in modeling the strength of the nugget by the additional parameter $p$ and the 
tractability of the likelihood which allows to embed the model in a hierarchical 
Bayesian model, the Reich--Shaby model \eqref{eq:reich-shaby} has found its way 
into several applications \citep[cf.][for instance]{shaby-reich-12,RSC14,SSRS15,SFM17}.
\medskip

While a simple max-stable process in the spectral representation
\eqref{eq:spec-repr} is written as the pointwise supremum of an infinite number 
of processes, i.e.\ the pointwise $\ell_\infty$ norm of the random sequence 
$\{A_i \cdot W_i(s)\}_{i\in\NN}$, the Reich--Shaby model \eqref{eq:reich-shaby} 
is represented as the pointwise $p$ norm of the finite random vector 
$(B_l^{1/p} \cdot w_l(s))_{l=1,\ldots,L}$. 
In this paper, we will present a more general class of representations of
max-stable processes by writing them as pointwise $\ell^p$ norms of sequences
of stochastic processes, including both de Haan's representation and the 
Reich--Shaby model as special cases. The finite-dimensional distributions
of the resulting processes will turn out to be generalized logistic mixtures
introduced by \citet{FNR09} and \citet{FMN13}.
\medskip

This paper is structured as follows: In Section \ref{sec:representation},
we will introduce the spectral representation based on $\ell^p$ norms.
As a single max-stable process might allow for equivalent $\ell^p$ norm
based representations for different $p \in (1,\infty]$, we give formulae
to switch between them in Section \ref{sec:transformation}.
Section \ref{sec:characterization} provides a full characterization
of the resulting class of processes whose properties are finally discussed
in Section \ref{sec:dependence}.

\section{Generalization of the Spectral Representation} \label{sec:representation}

Denoting by 
\begin{equation*} 
 \|\bA \circ \bV(s)\|_p = \begin{cases} 
                           \left[ \sum_{i \in \NN} (A_i \cdot V_i(s))^p\right]^{1/p}, & p \in (1,\infty),\\ 
                                  \max_{i \in \NN} A_i \cdot V_i(s), & p=\infty,
                          \end{cases}
\end{equation*}
the $\ell^p$ norm of the Hadamard product of the sequences 
$\bA = \{A_i\}_{i \in \NN}$ and $\bV(s) = \{V_i(s)\}_{i \in \NN}$, $s \in S$, 
the spectral representation \eqref{eq:spec-repr} can be rewritten as
\begin{equation*}
  X(s) = \|\bA \circ \bV(s)\|_\infty, \quad s \in S.
\end{equation*}
We present a more general representation replacing the $\ell^\infty$ norm by a general 
$\ell^p$ norm, $p \in (1,\infty]$, and multiplication by an independent noise 
process with $\Phi_p$ marginal distributions. Here, we use the convention that
$\Phi_\infty$ denotes the weak limit of $\Phi_p$ as $p \to \infty$, i.e.\
$\Phi_\infty(x) = \mathbf{1}_{[1,\infty)}(x)$ is a degenerate distribution 
function.

\begin{theorem} \label{thm:max-stability}
Let $p \in (1,\infty]$ and $\{U^{(p)}(s)\}_{s\in S}$ be a collection of 
independent $\Phi_p$ random variables. Further, let $\sum_{i \in \NN} 
\delta_{A_i}$ be a Poisson process on $(0,\infty)$ with intensity $a^{-2}\rd a$
and $W_i^{(p)}$, $i \in \NN$, be independent copies of a stochastic process 
$\{W^{(p)}(s), \, s \in S\}$ with $\EE\{W^{(p)}(s)\}=1$ for all $s \in S$.
Then, the process $X$, defined by
\begin{align} \label{eq:p-repr}
 X(s) = \frac{U^{(p)}(s)}{\Gamma(1-p^{-1})} \|\bA \circ \bW^{(p)}(s)\|_p, \qquad s \in S,
\end{align}
is simple max-stable.
\end{theorem}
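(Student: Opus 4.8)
The plan is to reduce the claim to a statement about finite-dimensional distributions: since max-stability of a process on a countable index set is a property of its finite-dimensional laws, it suffices to show that for every finite $\{s_1,\dots,s_d\}\subset S$ the vector $(X(s_1),\dots,X(s_d))$ has a max-stable distribution with unit Fr\'echet margins. The case $p=\infty$ needs no work, as then $U^{(\infty)}\equiv1$, $\Gamma(1-\infty^{-1})=\Gamma(1)=1$ and $\|\cdot\|_\infty=\max$, so \eqref{eq:p-repr} is exactly the spectral representation \eqref{eq:spec-repr} and \citet{dehaan84} applies. I therefore treat $p\in(1,\infty)$.

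First I would fix $x_1,\dots,x_d>0$ and condition on $(\bA,\{W_i^{(p)}\}_{i\in\NN})$, which renders each norm $\|\bA\circ\bW^{(p)}(s_j)\|_p$ deterministic. Exploiting that the noise variables $U^{(p)}(s_j)$ are independent, $\Phi_p$-distributed and independent of $(\bA,\{W_i^{(p)}\})$, and that $\Phi_p(u)=\exp(-u^{-p})$, this gives
\begin{align*}
 \PP\Bigl(\bigcap_{j=1}^d\{X(s_j)\le x_j\}\ \Big|\ \bA,\{W_i^{(p)}\}\Bigr)
 &=\prod_{j=1}^d \Phi_p\!\left(\frac{\Gamma(1-p^{-1})\,x_j}{\|\bA\circ\bW^{(p)}(s_j)\|_p}\right)\\
 &=\exp\!\left(-\frac{1}{\Gamma(1-p^{-1})^p}\sum_{i\in\NN} A_i^p\sum_{j=1}^d \frac{W_i^{(p)}(s_j)^p}{x_j^p}\right),
\end{align*}
where I used $\|\bA\circ\bW^{(p)}(s_j)\|_p^p=\sum_{i\in\NN} A_i^p\,W_i^{(p)}(s_j)^p$. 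Taking expectations, the right-hand side is the Laplace functional of the marked Poisson process $\{(A_i,W_i^{(p)})\}_{i\in\NN}$ with intensity $a^{-2}\,\rd a$ on $(0,\infty)$ and mark law $Q=\mathcal{L}(W^{(p)})$. Abbreviating $g(w)=\sum_{j=1}^d w(s_j)^p/x_j^p$, Campbell's formula yields
\begin{align*}
 \PP\Bigl(\bigcap_{j=1}^d\{X(s_j)\le x_j\}\Bigr)
 =\exp\!\left(-\int\!\int_0^\infty \Bigl(1-\exp\bigl(-\tfrac{a^p g(w)}{\Gamma(1-p^{-1})^p}\bigr)\Bigr)\,a^{-2}\,\rd a\,Q(\rd w)\right).
\end{align*}

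The step I expect to be the technical heart is the inner radial integral. Integrating by parts (the boundary terms vanish precisely because $p>1$) and substituting $t=c\,a^p$ gives the identity $\int_0^\infty(1-e^{-ca^p})\,a^{-2}\,\rd a=\Gamma(1-p^{-1})\,c^{1/p}$ for every $c>0$. Applied with $c=g(w)/\Gamma(1-p^{-1})^p$, the normalizing constant $\Gamma(1-p^{-1})$ cancels exactly and the inner integral collapses to $g(w)^{1/p}$; this finiteness simultaneously confirms that the norm in \eqref{eq:p-repr} is almost surely finite. Consequently
\begin{align*}
 \PP\Bigl(\bigcap_{j=1}^d\{X(s_j)\le x_j\}\Bigr)=\exp\bigl(-V(x_1,\dots,x_d)\bigr),\qquad
 V(x)=\EE\biggl[\Bigl(\sum_{j=1}^d \frac{W^{(p)}(s_j)^p}{x_j^p}\Bigr)^{1/p}\biggr].
\end{align*}

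It then remains to read off the two defining features. The exponent $V$ is homogeneous of degree $-1$, since sending $x_j\mapsto t\,x_j$ factors $t^{-1}$ out of the $\ell^p$ expression; hence $\PP(\max_{i\le n}X_i(s_j)\le n\,x_j,\ \forall j)=\exp(-n\,V(nx))=\exp(-V(x))$, which is max-stability with $a_n\equiv n$ and $b_n\equiv 0$. Setting $d=1$ and using $\bigl(W^{(p)}(s)^p\bigr)^{1/p}=W^{(p)}(s)$ together with $\EE\{W^{(p)}(s)\}=1$ gives $\PP(X(s)\le x)=\exp(-1/x)$, so the margins are unit Fr\'echet and $X$ is simple max-stable. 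Beyond the radial integral, the only care needed is the bookkeeping in the Laplace functional; the rest is a homogeneity and normalization check.
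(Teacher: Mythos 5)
Your proposal is correct and takes essentially the same route as the paper: condition on $(\bA,\{W_i^{(p)}\})$, compute the unconditional law via the Poisson Laplace functional, reduce the radial integral to a Gamma-function identity so that the exponent becomes $\EE\bigl\|\bigl(W^{(p)}(s_i)/x_i\bigr)_{i=1}^n\bigr\|_p$, and conclude by homogeneity of the $\ell^p$ norm. The only (harmless) deviations are cosmetic: you evaluate $\int_0^\infty(1-e^{-ca^p})a^{-2}\,\rd a=\Gamma(1-p^{-1})c^{1/p}$ by parts where the paper invokes Gradshteyn--Ryzhik 3.478.2, you extract a.s.\ finiteness of $\|\bA\circ\bW^{(p)}(s)\|_p$ as a byproduct of the Laplace-transform computation (valid, since the Laplace functional formula needs no integrability for nonnegative integrands) where the paper runs a separate Campbell integrability check, and you verify the unit Fr\'echet margins explicitly, which the paper leaves implicit in its fidi formula.
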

\begin{proof}
For $p=\infty$, we have $U^{(p)}(s) =1$ a.s.\ and, thus, representation 
\eqref{eq:p-repr} is of the same form as representation \eqref{eq:spec-repr}. 
Consequently, max-stability follows from \citet{dehaan84}. 

For $p \in (1,\infty)$, we first show that $\|\bA \circ \bW(s)\|_p < \infty$ 
a.s. According to Campbell's Theorem \citep[cf.][p.28]{kingman93}, this holds 
true if and only if
\begin{equation} \label{eq:2bfin}
\EE\left( \int_0^\infty \min\{|aW^{(p)}(s)|^p,1\} a^{-2} \, \rd a\right)<\infty.
\end{equation}
Substituting $v = a W(s)$, we can easily see that the left-hand side of
\eqref{eq:2bfin} equals
\begin{equation*}
  \EE\left(W^{(p)}(s)\right) \cdot \int_0^\infty \min\{|v|^p,1\} v^{-2} \, \rd v = 1 + \frac 1 {p-1}.
\end{equation*}
Thus, $\|\bA \circ \bW^{(p)}(s)\|_p < \infty$ a.s.
Then, for $s_1,\ldots,s_n \in S$, $x_1,\ldots,x_n>0$, $n \in \NN$, we obtain
\begin{align*}
   & \PP(X(s_i) \leq x_i, \, i=1,\ldots,n) \\
={}& \EE\left(\PP\left( U(s_i) \leq \frac{\Gamma(1-p^{-1}) x_i}{\|\bA \circ \bW^{(p)}(s_i)\|_p}, \, i=1,\ldots,n \, \Big| \, \bA, \bW^{(p)}\right)\right)\\
={}& \EE\left(\exp\left(- \sum_{i=1}^n \left( \frac{\Gamma(1-p^{-1}) x_i}{\|\bA \circ \bW^{(p)}(s_i)\|_p}\right)^{-p}\right)\right).
\end{align*}
Using well-known results on the Laplace functional of Poisson point processes,
this yields
\begin{align}
   & \PP(X(s_i) \leq x_i, \, i=1,\ldots,n) \nonumber \\
={}& \exp\left(\EE\left(\int_0^{\infty}\left\{
       \exp\left(-\sum\nolimits_{i=1}^n \left(\frac{a W^{(p)}(s_i)}{\Gamma(1-p^{-1}) x_i}\right)^p \right)-1\right\}
                 a^{-2}\,\rd a\right)\right) \nonumber \\
={}& \exp\left(\EE \left( \left\| \left(\frac{W^{(p)}(s_i)}{x_i}\right)_{i=1}^n\right\|_p \right) \cdot \frac{1}{p\Gamma(1-p^{-1})} \cdot
           \int_0^\infty \left(e^{-a}-1\right)  a^{-1-p^{-1}} \, \rd a \right) \nonumber \\ 
={}& \exp\left(-\EE\left(\left\| \left(\frac{W^{(p)}(s_i)}{x_i}\right)_{i=1}^n\right\|_p\right)\right) \label{eq:fidi}
\end{align}
where we used Formula 3.478.2 in \citet{gradshteyn-ryzik-88}. Thus, for $m$
independent copies $X_1, \ldots, X_m$ of $X$, $m \in \NN$, the homogeneity of
the $\ell^p$ norm yields
\begin{equation*}
 \PP\left(\frac 1 m \max_{j=1}^m X_j(s_i) \leq x_i, \, i=1,\ldots,n\right) = \PP\left(X(s_i) \leq x_i, \, i=1,\ldots,n\right),
\end{equation*}
i.e.\ $Z$ is simple max-stable.
\end{proof}
\medskip

\begin{remark}
  Theorem \ref{thm:max-stability} could alternatively be verified by observing 
  that the process $T(s) = \|\bA \circ \bW^{(p)}(s)\|_p^p$, $s \in S$, is 
  $\alpha$-stable with $\alpha=1/p$ (see also the proof of Theorem
  \ref{thm:character}). Thus, all the  finite-dimensional distributions of $X$ 
  are generalized logistic mixtures \citep[cf.][]{FNR09,FMN13} and, consequently, 
  are max-stable distributions.
\end{remark}
\medskip

Noting that the finite-dimensional distributions of the Reich--Shaby model 
\eqref{eq:reich-shaby} are given by
\begin{equation*}
      \PP\left(X(s_i) \leq x_i, \, i=1,\ldots,n\right) 
  ={} \exp\left(-\sum\nolimits_{j=1}^L \left\|\left(\frac{w_j(s_i)}{x_i}\right)_{i=1}^n\right\|_p\right),
\end{equation*}
it can be easily seen that \eqref{eq:reich-shaby} is a special case of 
representation \eqref{eq:p-repr} where $W$ follows the discrete distribution
$\PP(W = L w_i) = 1/L$, $i=1,\ldots,L$. Further, the classical spectral
representation \eqref{eq:spec-repr} by \citet{dehaan84} can be recovered
from representation \eqref{eq:p-repr} with $p = \infty$.
\medskip

Analogously to the law of the spectral processes $\{V_i(s),\, s \in S\}_{i \in \NN}$
in representation \eqref{eq:spec-repr},  the law of the processes 
$\{W^{(p)}_i(s),\, s \in S\}_{i \in \NN}$ in the $\ell^p$ norm based representation
of a given process $\{X(s),\, i \in S\}$ is  not unique: Let $Y_i$, $i \in \NN$, be
independently and identically distributed random variables with $\EE(Y_i)=1$ which 
are independent from $\sum_{i \in \NN} \delta_{A_i}$ and $\{W^{(p)}(s), \, s \in S\}$.
Then, the processes $\{U^{(p)}(s) / \Gamma(1-p^{-1}) \cdot \| \bA \circ \bW^{(p)}(s)\|_p, \, s \in S\}$
and $\{U^{(p)}(s) / \Gamma(1-p^{-1}) \cdot \| \bA \circ \bY \circ \bW^{(p)}(s)\|_p, \, s \in S\}$
are equal in distribution.

Consequently, even for some fixed $p \in (1,\infty]$ representation \eqref{eq:p-repr}
of a simple max-stable process $X$ is not unique. Furthermore, there might be 
representations of type \eqref{eq:p-repr} with different $p$ for the same process
$X$. Such equivalent representations are discussed in the following section.

\section{Equivalent Representations} \label{sec:transformation}

By \citet{dehaan84}, the class of simple max-stable processes is fully covered
by the class of processes which allow for the spectral representation 
\eqref{eq:spec-repr}, i.e.\ representation \eqref{eq:p-repr} with $p=\infty$.
Thus, any $\ell^p$ norm based representation \eqref{eq:p-repr} with $p<\infty$ 
of a simple max-stable process can be transformed to an equivalent 
representation of type \eqref{eq:spec-repr}. This transformation is presented
in the following proposition. Even more generally, it is shown how a 
$\ell^q$ norm based representation can be derived from a $\ell^p$ norm based
representation with $p < q < \infty$. 

\begin{proposition} \label{prop:trafo-repr}
 Let $X$ be a simple max-stable process with representation \eqref{eq:p-repr}
 for some $p \in (1,\infty)$. Then, the following holds:
 \begin{enumerate}
  \item The process $X$ allows for the spectral representation 
      \eqref{eq:spec-repr} with
      \begin{equation} \label{eq:vw}
        V(\cdot) =_d \frac{U^{(p)}(\cdot)}{\Gamma(1-p^{-1})}  W^{(p)}(\cdot).
      \end{equation}
  \item For $q \in (p,\infty)$, the process $X$ satisfies
      \begin{equation} \label{eq:q-repr}
       X(\cdot) =_d \frac{U^{(q)}(\cdot)}{\Gamma(1-q^{-1})}  \|\bA \circ \bW^{(q)}(\cdot)\|_q,       
      \end{equation}
      where $\{U^{(q)}(s)\}_{s \in S}$ is a collection of independent $\Phi_q$
      random variables and $W^{(q)}_i$, $i \in \NN$, are independent copies of 
      a stochastic process $\{W^{(q)}(s), \, s \in S\}$ given by
      \begin{equation*}
       W^{(q)}(s) = \frac{\Gamma(1-q^{-1})}{\Gamma(1-p^{-1})} (T_{(p/q)}(s))^{p/q} \cdot W^{(p)}(s), \quad s \in S.
      \end{equation*}
      Here, independently from the process $W^{(p)}$, the collection 
      $\{T_{(p/q)}(s)\}_{s \in S}$ consists of independent stable random 
      variables whose law is given by the Laplace transform
      $$\EE\left(e^{-tT_{(p/q)}(s)}\right) = e^{-t^{p/q}}, \quad t \geq 0.$$
 \end{enumerate}
\end{proposition}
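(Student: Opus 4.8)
The plan is to prove both parts by matching finite-dimensional distributions. Every process in sight is simple max-stable, so its law is determined by its exponent function, i.e.\ by the quantity appearing in the exponent of \eqref{eq:fidi}; hence it suffices to show that the $\ell^\infty$- (resp.\ $\ell^q$-) representation built from the prescribed spectral process produces the same exponent function as the given $\ell^p$-representation of $X$. Throughout I would use \eqref{eq:fidi}, which holds verbatim for every $p\in(1,\infty)$ and, in the $\max$-form, for $p=\infty$, together with the nonnegativity of the spectral processes, so that all $\ell^p$-norms are well defined.

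For part (i) I would compute the $\ell^\infty$-exponent $\EE\bigl(\max_i V(s_i)/x_i\bigr)$ directly for $V=_d U^{(p)}W^{(p)}/\Gamma(1-p^{-1})$. Conditioning on $\bW^{(p)}$, the variables $U^{(p)}(s_1),\dots,U^{(p)}(s_n)$ are independent $\Phi_p$, and for nonnegative constants $c_1,\dots,c_n$ the maximum $\max_i c_iU^{(p)}(s_i)$ is again $\Phi_p$-distributed with scale $\|(c_i)_i\|_p$; using $\EE(U^{(p)})=\Gamma(1-p^{-1})$ (a one-line Gamma integral), the conditional expectation equals $\|(W^{(p)}(s_i)/x_i)_i\|_p$ and the two $\Gamma(1-p^{-1})$ factors cancel. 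The outer expectation over $\bW^{(p)}$ then reproduces exactly the exponent in \eqref{eq:fidi}. It remains to note $V\ge0$ and $\EE(V(s))=\EE(U^{(p)})\,\EE(W^{(p)}(s))/\Gamma(1-p^{-1})=1$, so that \citet{dehaan84} applies and \eqref{eq:vw} is a legitimate spectral process.

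For part (ii) the target is $\EE\|(W^{(q)}(s_i)/x_i)_i\|_q=\EE\|(W^{(p)}(s_i)/x_i)_i\|_p$, after which \eqref{eq:fidi} at exponent $q$ yields \eqref{eq:q-repr}. Raising $W^{(q)}(s_i)$ to the $q$-th power leaves the stable factor as $T_{(p/q)}(s_i)$ itself, so after conditioning on $\bW^{(p)}$ and writing $d_i=W^{(p)}(s_i)/x_i$ the problem reduces to the deterministic identity
\begin{equation*}
 \frac{\Gamma(1-q^{-1})}{\Gamma(1-p^{-1})}\,\EE\Bigl[\Bigl(\textstyle\sum_i T_{(p/q)}(s_i)\,d_i^{\,q}\Bigr)^{1/q}\Bigr]=\Bigl(\textstyle\sum_i d_i^{\,p}\Bigr)^{1/p},
\end{equation*}
with the $T_{(p/q)}(s_i)$ independent. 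I would prove this via the power--integral representation $z^{1/q}=\frac{q^{-1}}{\Gamma(1-q^{-1})}\int_0^\infty(1-e^{-zt})\,t^{-1/q-1}\,\rd t$, inserting $z=\sum_i T_{(p/q)}(s_i)\,d_i^{\,q}$ and pulling the expectation inside. Independence and the stable Laplace transform $\EE(e^{-tT_{(p/q)}(s)})=e^{-t^{p/q}}$ collapse the product over $i$ into the single stretched exponential $\exp(-t^{p/q}\sum_i d_i^{\,p})$, because $(d_i^{\,q})^{p/q}=d_i^{\,p}$; the remaining one-dimensional integral is evaluated by the very Formula 3.478.2 of \citet{gradshteyn-ryzik-88} used in Theorem \ref{thm:max-stability}, and the prefactor $\Gamma(1-q^{-1})/\Gamma(1-p^{-1})$ cancels. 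The $n=1$ instance of the same computation gives $\EE(W^{(q)}(s))=1$, and $W^{(q)}\ge0$ is clear, so \eqref{eq:fidi} is genuinely applicable at $q$.

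The main obstacle is this stable computation in part (ii): one must justify the interchange of expectation and integral, which rests on finiteness of the governing fractional moment of $T_{(p/q)}$ --- finite precisely because the relevant exponent $1/p=(1/q)/(p/q)$ is strictly below $1$, hence strictly below the stability index, an inequality that uses $p>1$. The remainder is careful bookkeeping of the $\Gamma$-factors and the matching of powers through $q\cdot(p/q)=p$. As a consistency check I would let $q\to\infty$: then $\Gamma(1-q^{-1})\to1$ and $(T_{(p/q)}(s))^{1/q}$ converges in law to a $\Phi_p$ variable (compare Mellin transforms, $\EE[(T_{(p/q)})^{s/q}]=\Gamma(1-s/p)/\Gamma(1-s/q)\to\Gamma(1-s/p)$), recovering the spectral process \eqref{eq:vw} of part (i).
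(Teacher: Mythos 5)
Your part (i) is essentially the paper's own argument: both you and the paper condition on $\bW^{(p)}$, identify the conditional law of $\max_i U^{(p)}(s_i)W^{(p)}(s_i)/x_i$ as $p$-Fr\'echet with scale $\|(W^{(p)}(s_i)/x_i)_{i}\|_p$, and cancel the $\Gamma(1-p^{-1})$ factors; your extra check that $\EE V(s)=1$ is a welcome addition the paper leaves implicit. For part (ii), however, you take a genuinely different route. The paper reduces \eqref{eq:q-repr} to part (i): it writes the right-hand side in spectral form with spectral process $\widetilde V(\cdot)=U^{(q)}(\cdot)(T_{(p/q)}(\cdot))^{1/q}W^{(p)}(\cdot)/\Gamma(1-p^{-1})$ and then invokes the distributional identity that $U^{(q)}(s)\,(T_{(p/q)}(s))^{1/q}\sim\Phi_p$, obtained by conditioning on the stable variable and citing \citet{FNR09}. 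You instead match exponent functions directly, re-deriving that identity in integrated form via the subordination formula $z^{1/q}=\frac{q^{-1}}{\Gamma(1-q^{-1})}\int_0^\infty(1-e^{-zt})\,t^{-1/q-1}\,\rd t$ and the stable Laplace transform; I verified the constants, and indeed $\EE[(\sum_i T_i d_i^q)^{1/q}]=\frac{\Gamma(1-p^{-1})}{\Gamma(1-q^{-1})}(\sum_i d_i^p)^{1/p}$, so the prefactors cancel as you claim. The paper's route is shorter and modular; yours is self-contained (no appeal to \citet{FNR09}) and delivers $\EE\,W^{(q)}(s)=1$ as the $n=1$ case of the same computation. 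One simplification: the interchange you flag as the main obstacle needs no moment condition at all, since the integrand $(1-e^{-zt})t^{-1/q-1}$ is nonnegative and Tonelli applies; the finiteness of $\EE[T_{(p/q)}^{1/q}]$ (i.e.\ $1/q<p/q$, using $p>1$) is what guarantees $\EE\,W^{(q)}(s)<\infty$, not the interchange itself.

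One point you should have made explicit: your computation tacitly replaces the exponent $p/q$ in the displayed definition of $W^{(q)}$ by $1/q$ --- your sentence ``raising $W^{(q)}(s_i)$ to the $q$-th power leaves the stable factor as $T_{(p/q)}(s_i)$ itself'' is true only with exponent $1/q$. This is the correct reading: with exponent $p/q$ one would have $\EE[T_{(p/q)}^{p/q}]=\infty$ (the index-$\alpha$ moment of an $\alpha$-stable subordinator diverges), hence $\EE\,W^{(q)}(s)=\infty$, contradicting the hypotheses of Theorem \ref{thm:max-stability}; moreover the paper's own proof (its $\widetilde V$ carries $(T_{(p/q)})^{1/q}$) and the subsequent remark (where $(T_{(p/q)}(\cdot))^{1/q}\to_d U^{(p)}(\cdot)$ as $q\to\infty$) both use $1/q$, so the statement's $p/q$ is evidently a typo. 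Your silent correction is the right call mathematically, but a blind proof should have named the discrepancy rather than paper over it.
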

\begin{proof}
 \begin{enumerate}
  \item By comparing the finite-dimensional distributions of the processes 
    defined via \eqref{eq:spec-repr} and \eqref{eq:p-repr}, it suffices to show
    that
    \begin{equation} \label{eq:zz-vw}
      \frac 1 {\Gamma(1-p^{-1})} \EE\left(\left\|\left(\frac{U^{(p)}(s_i) W^{(p)}(s_i)}{x_i}\right)_{i=1}^n\right\|_\infty\right)
      = \EE\left(\left\|\left(\frac{W^{(p)}(s_i)}{x_i}\right)_{i=1}^n\right\|_p\right), 
    \end{equation}
    for all $s_1, \ldots, s_n \in S$, $x_1,\ldots,x_n>0$, $n \in \NN$. To this 
    end, we  first note that
    \begin{align*}
        \PP\left(\left\|\left(\frac{U^{(p)}(s_i) W^{(p)}(s_i)}{x_i}\right)_{i=1}^n\right\|_\infty \leq y \, \bigg| \, \bW^{(p)} \right)
     =  \exp\left(-\frac 1 {y^p} \sum_{i=1}^n \left(\frac{W(s_i)}{x_i}\right)^{p}\right), \quad y>0,
    \end{align*}
    that is, conditionally on $\bW^{(p)}$, the norm 
    $\|(U^{(p)}(s_i) W^{(p)}(s_i)/x_i)_{i=1}^n\|_\infty$
    follows a $p$-Fr\'echet distribution with scale parameter $\|(W^{(p)}(s_i)/x_i)_{i=1}^n\|_p$. Thus,
    \begin{align*}
          \EE\left(\left\|\left(\frac{U^{(p)}(s_i) W^{(p)}(s_i)}{x_i}\right)_{i=1}^n\right\|_\infty\right)
     ={}& \EE_W \left\{\EE \left(\left\|\left(\frac{U^{(p)}(s_i) W^{(p)}(s_i)}{x_i}\right)_{i=1}^n\right\|_\infty \, \bigg| \, \bW^{(p)} \right) \right\}\\
     ={}& \EE_W \left\{\Gamma(1-p^{-1})\left\|\left(\frac{W^{(p)}(s_i)}{x_i}\right)_{i=1}^n\right\|_p\right\},
    \end{align*}
    i.e.\ Equation \eqref{eq:zz-vw}.
  \item From the first part of the proposition, it follows that the right-hand 
    side of \eqref{eq:q-repr} allows for a spectral representation 
    \eqref{eq:spec-repr} where the spectral functions are independent copies of
    the process $\widetilde V$ given by
    $$ \widetilde V(\cdot) = \frac {U^{(q)}(\cdot) \cdot (T_{p/q}(\cdot))^{1/q}}{\Gamma(1-p^{-1})} \cdot W^{(p)}(\cdot),$$
    while the spectral functions of the process $X$ on the left-hand side of
    \eqref{eq:q-repr} are independent copies of the process $V$ given in 
    \eqref{eq:vw}. Conditioning on the value of the stable random variable 
    $T_{(p/q)}(s)$, it can be shown that the product $U^{(q)}(s) \cdot T_{(p/q)}(s)$
    has the distribution function $\Phi_p$ for all $s \in S$ \citep[cf.][]{FNR09}
    and, thus,  $\widetilde V(\cdot) =_d V(\cdot)$. 
 \end{enumerate}
\end{proof}
\medskip

\begin{remark}
  Even though the transformation in the second part of the proposition 
  requires $p < q < \infty$, the two cases $p=q$ and $q=\infty$ can be regarded
  as limiting cases.
 
  As $q \searrow p$, we obtain that $U^{(q)}(\cdot) \to_d U^{(p)}(\cdot)$ and
  $\{T_{(p/q)}(s)\}_{s \in S}$ converges in distribution to a collection of
  random variables which equal $1$ a.s. Thus, in the limit $p=q$, there is
  no transformation.
 
  As $q \to\infty$, we have that $\Gamma(1-q^{-1}) \to 1$ and each
  $U^{(q)}(s)$, $s \in S$, converges to $1$ a.s. Further, by Thm.~1.4.5 in 
  \citet{samo-taqqu-94}, for each $s \in S$, the random variable $T_{(p/q)}(s)$
  can be represented as 
  $\frac{1}{\Gamma(1-p/q)} \sum_{i \in \NN} (\tilde A_i Y_i)^{q/p}$ where
  $\{\tilde A_i\}_{i \in \NN}$ are the points of a Poisson point process on
  $(0,\infty)$ with intensity $\tilde a^{-2} \rd \tilde a$ and $Y_i$, 
  $i \in \NN$, are independently and identically distributed non-negative 
  random variables with expectation $1$. Thus, as $q \to \infty$,
  $$ \left(T_{(p/q)}(s)\right)^{1/q} 
    =_d \left(\frac{1}{\Gamma(1-p/q)} \sum\nolimits_{i \in \NN} (\tilde A_i Y_i)^{q/p}\right)^{1/q}
    \longrightarrow_d \max_{i \in \NN} (\tilde A_i Y_i)^{1/p} $$
  which has the distribution function $\Phi_p$. Consequently, 
  $\left(T_{(p/q)}(\cdot)\right)^{1/q}\to_d U^{(p)}(\cdot)$.
\end{remark}
\medskip

Denoting by $\mathcal{MS}$ the class of all simple max-stable processes and by
$\mathcal{MS}_p$ the class of simple max-stable processes allowing for a
$\ell^p$ norm based spectral representation \eqref{eq:p-repr}, Proposition
\ref{prop:trafo-repr} yields 
$$\mathcal{MS}_p \subset \mathcal{MS}_q \subset \mathcal{MS}_\infty = \mathcal{MS}, \quad 1 < p < q <\infty.$$
A full characterization of the class $\mathcal{MS}_p $ is given in the following
section.

\section{Existence of $\ell^p$ Norm Based Representations} \label{sec:characterization}

In the following, we will present a necessary and sufficient criterion for the 
existence of a $\ell^p$ norm based representation of a simple max-stable 
process $X$ in terms of the stable tail dependence functions of its 
finite-dimensional distributions. For a simple max-stable distribution 
$(X(s_1),\ldots,X(s_n))^\top$, its stable tail dependence function
$l_{s_1,\ldots,s_n}$ is defined via
\begin{align*}
 l_{s_1,\ldots,s_n}:{}& [0,\infty)^n \to [0,\infty)\\
                      & (x_1,\ldots,x_n) \mapsto -\log\left\{\PP\left(X(s_1) \leq \frac 1 {x_1}, \ldots, X(s_n) \leq \frac 1 {x_n}\right) \right\}. 
\end{align*}
From the spectral representation \eqref{eq:spec-repr}, we obtain the form
\begin{align} \label{eq:stdf-spec}
 l_{s_1,\ldots,s_n}(x) = \EE\left( \max_{i=1,\ldots,n} x_i W(s_i) \right), \quad x \in [0,\infty)^n. 
\end{align}
The stable tail dependence function is homogeneous and convex 
\citep[cf.][among others]{BGST04}. Further, from Equation \eqref{eq:stdf-spec}
together with dominated convergence, we can deduce that the stable tail
dependence function is continuous.

\begin{theorem} \label{thm:character}
 Let $\{X(s), \ s \in S\}$ a simple max-stable process and $p \in (1,\infty)$. 
 Then, the following statements are equivalent:
 \begin{itemize}
  \item[(i)]  $X$ possesses a $\ell^p$ norm based representation \eqref{eq:p-repr}.
  \item[(ii)] For all pairwise distinct $s_1,\ldots,s_n \in S$ and $n \in \NN$, the
        function $f_{s_1,\ldots,s_n}$, defined by
        $$ f^{(p)}_{s_1,\ldots,s_n}(x) = l_{s_1,\ldots,s_n}(x_1^{1/p},\ldots,x_n^{1/p}), \quad x=(x_1,\ldots,x_n) \in [0,\infty)^n,$$
        is conditionally negative definite on the additive semigroup 
        $[0,\infty)^n$, i.e.\ for all $x^{(1)}$, $\ldots$, $x^{(m)} \in [0,\infty)^n$ and $a_1,\ldots,a_m \in \RR$
        such that $\sum_{i=1}^m a_i=0$, we have
        \begin{equation} \label{eq:neg-def}
          \sum_{i=1}^m \sum_{j=1}^m a_i a_j f^{(p)}_{s_1,\ldots,s_n}(x^{(i)} + x^{(j)}) \leq 0.
        \end{equation}
 \end{itemize}
\end{theorem}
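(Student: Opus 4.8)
The plan is to reduce both implications to a single identity. Under any representation \eqref{eq:p-repr}, Equation \eqref{eq:fidi} gives
$$f^{(p)}_{s_1,\ldots,s_n}(x) = l_{s_1,\ldots,s_n}(x_1^{1/p},\ldots,x_n^{1/p}) = \EE\left[\left(\sum_{i=1}^n x_i\,W^{(p)}(s_i)^p\right)^{1/p}\right],$$
so that, writing $\xi_i = W^{(p)}(s_i)^p \geq 0$, the function $f^{(p)}$ is exactly $x \mapsto \EE[\langle x,\xi\rangle^{1/p}]$. The theorem thereby becomes the statement that a transformed stable tail dependence function, which is continuous, homogeneous of degree $1/p$, nonnegative and vanishes at the origin, is conditionally negative definite if and only if it arises by averaging the $1/p$-th power of a nonnegative linear form. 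Throughout I will use that $t\mapsto t^{1/p}$ is a Bernstein function with Lévy representation $t^{1/p}=c_p\int_0^\infty(1-e^{-ut})u^{-1-1/p}\,\rd u$, where $c_p=(p\,\Gamma(1-1/p))^{-1}$.

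For (i) $\Rightarrow$ (ii) I would argue directly. Fixing $\xi$ and $u>0$, a one-line computation shows that $x\mapsto 1-e^{-u\langle x,\xi\rangle}$ is conditionally negative definite: for $\sum_k a_k=0$ one obtains, using additivity of $x\mapsto\langle x,\xi\rangle$, that $\sum_{k,l}a_k a_l\bigl(1-e^{-u\langle x^{(k)}+x^{(l)},\xi\rangle}\bigr)=-\bigl(\sum_k a_k e^{-u\langle x^{(k)},\xi\rangle}\bigr)^2\leq 0$. Since the conditionally negative definite functions form a convex cone stable under pointwise limits and hence under integration against nonnegative measures, integrating over $u$ with the Lévy measure and then over the law of $\xi$ shows that $f^{(p)}=\EE[\langle\,\cdot\,,\xi\rangle^{1/p}]$ is conditionally negative definite.

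For the converse (ii) $\Rightarrow$ (i), I would first record that, for any admissible $\bW^{(p)}$, Campbell's theorem applied to $\sum_i\delta_{A_i}$ gives the vector $T=(\|\bA\circ\bW^{(p)}(s_i)\|_p^p)_{i=1}^n$ the Laplace transform $\EE[\exp(-\langle t,T\rangle)]=\exp(-\Gamma(1-1/p)f^{(p)}_{s_1,\ldots,s_n}(t))$, which is the precise sense in which $T$ is $1/p$-stable (cf.\ the remark following Theorem \ref{thm:max-stability}). The idea is to run this in reverse. Assuming $f^{(p)}$ conditionally negative definite with $f^{(p)}(0)=0$, Schoenberg's theorem on the abelian semigroup $([0,\infty)^n,+)$ shows that $\exp(-\lambda f^{(p)})$ is positive definite for every $\lambda>0$; being continuous, bounded by $1$ and equal to $1$ at the origin, it is by the semigroup version of Bochner's theorem the Laplace transform of a probability measure on $[0,\infty)^n$. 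Taking $\lambda=\Gamma(1-1/p)$ produces a nonnegative random vector $T$ with the Laplace transform above, which is strictly $1/p$-stable by homogeneity of $f^{(p)}$. I would then convert $T$ back into a spectral process via the LePage-type series representation for nonnegative strictly $1/p$-stable vectors (Thm.\ 1.4.5 in \citet{samo-taqqu-94}, as in the remark following Proposition \ref{prop:trafo-repr}), writing $T=_d \mathrm{const}\cdot(\sum_k A_k^p W_k(s_i)^p)_{i=1}^n$ and reading off a finite-dimensional law of $W^{(p)}$ through the spectral measure of $T$, with $\xi_i=W^{(p)}(s_i)^p$. The normalization $\EE[W^{(p)}(s)]=1$ is forced by the unit-Fréchet margins, since in dimension one $f^{(p)}_s(x)=x^{1/p}$. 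Finally, because all the $f^{(p)}_{s_1,\ldots,s_n}$ descend from the single process $X$, the reconstructed finite-dimensional laws are consistent, so Kolmogorov's extension theorem produces one process $\{W^{(p)}(s)\}_{s\in S}$, and by construction the resulting representation \eqref{eq:p-repr} reproduces the finite-dimensional distributions \eqref{eq:fidi}.

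I expect the main obstacle to lie in the converse direction, specifically in two steps: first, passing from conditional negative definiteness to a genuine stable Laplace transform, where one must ensure the semigroup Bochner representation places no mass on the degenerate semicharacters (to be handled via continuity and homogeneity of $f^{(p)}$); and second, upgrading the dimension-by-dimension reconstruction of $W^{(p)}$ to a consistent family, so that a single spectral process exists rather than merely matching marginals in each fixed dimension. By comparison, the implication (i) $\Rightarrow$ (ii) and the marginal normalization are routine.
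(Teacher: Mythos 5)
Your overall route coincides with the paper's. In the forward direction you reduce, via \eqref{eq:fidi}, to $f^{(p)}_{s_1,\ldots,s_n}(x)=\EE\{\langle x,\xi\rangle^{1/p}\}$ with $\xi_i=W^{(p)}(s_i)^p$ and establish conditional negative definiteness through the Bernstein function $t\mapsto t^{1/p}$; the paper cites Thm.~3.2.9 of \citet{berg-etal-84} at this point, whereas you inline its proof via the L\'evy representation of $t^{1/p}$ and the squared-sum identity for $\sum_k a_k=0$ -- both correct, yours slightly more self-contained. In the converse you likewise follow the paper's skeleton: Schoenberg plus the semigroup Bochner theorem on $[0,\infty)^n$ to obtain a Laplace representation (the paper's \eqref{eq:laplace-mu}, via Thms.~3.2.2 and 4.4.7 of \citet{berg-etal-84}; continuity of $l_{s_1,\ldots,s_n}$ disposes of the degenerate semicharacters, as you anticipate), strict $1/p$-stability of the resulting vector from $1$-homogeneity of $l$, and a LePage-type series to read off $W^{(p)}$. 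Your normalization bookkeeping (the factor $\Gamma(1-p^{-1})$ in the Laplace transform, the unspecified constant in the series, and $\EE\{W^{(p)}(s)\}=l_s(1)=1$ from the unit Fr\'echet margins) is consistent and harmless.

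The one genuine gap is the final step, precisely the one you flag but do not resolve. You reconstruct the law of $(W^{(p)}(s_1),\ldots,W^{(p)}(s_n))$ dimension by dimension from the spectral measure of each stable vector and then assert consistency ``because all the $f^{(p)}$ descend from the single process $X$.'' That inference is not available as stated: the spectral process in a representation \eqref{eq:p-repr} is far from unique (the paper makes this point explicitly at the end of Section~2), so the $n$-dimensional reconstruction yields only \emph{some} admissible law in each dimension, and compatibility of your particular choices under projections would itself have to be proved (it can be, via uniqueness and projection-compatibility of suitably normalized spectral measures, but that argument is absent). The paper sidesteps the issue by reordering the steps: Kolmogorov's existence theorem is applied not to $W^{(p)}$ but to the family of probability measures $\mu_{s_1,\ldots,s_n}$ from \eqref{eq:laplace-mu}, whose consistency is immediate from the projection property \eqref{eq:stdf-proj} of the stable tail dependence function together with uniqueness of Laplace transforms; this yields a single $1/p$-stable \emph{process} $\{T(s),\,s\in S\}$, to which the process-level representation theorems (Thm.~13.1.2 and Thm.~3.10.1 of \citet{samo-taqqu-94}, rather than the vector-level Thm.~1.4.5 you invoke) apply, producing one spectral process $\tilde W$ for all dimensions simultaneously; setting $W^{(p)}_i=\tilde W_i^{1/p}$ and checking the finite-dimensional distributions via $X(s)=_d U^{(p)}(s)T(s)^{1/p}$ then closes the proof. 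If you adopt this ordering -- build $T$ first, represent it afterwards -- your argument becomes complete with no other changes.
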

\begin{proof}
 Firstly, we show that (i) implies (ii). To this end, let $X$ be a simple 
 max-stable process with representation \eqref{eq:p-repr}. Then, from 
 \eqref{eq:fidi}, we obtain that
 \begin{align*}
  f^{(p)}_{s_1,\ldots,s_n}(x) ={}& -\log\left\{\PP\left(X(s_1) \leq \frac 1 {x_1^{1/p}}, \ldots, X(s_n) \leq \frac 1 {x_n^{1/p}}\right) \right\}\\
                        ={}& \EE\left\{ \left( \sum\nolimits_{i=1}^n x_i W^{(p)}(s_i)^p \right)^{1/p} \right\}, \qquad x=(x_1,\ldots,x_n) \in [0,\infty)^n.
 \end{align*}
 Now, let $w(s_1),\ldots, w(s_n) \geq 0$ be fixed. Then, by a straightforward 
 computation, it can be seen that the function 
 $x \mapsto \sum_{k=1}^n x_k w(s_k)^p$ is conditionally negative definite on 
 $[0,\infty)^n$. As the function $y \mapsto y^{1/p}$ is a Bernstein function 
 and the composition of a conditionally negative function and a Bernstein 
 function yields a conditionally negative definite function 
 \citep[][Thm. 3.2.9]{berg-etal-84}, the function 
 $x \mapsto \left( \sum_{k=1}^n x_k w(s_k)^p\right)^{1/p}$ is conditionally 
 negative definite, as well. Being a mixture, the same is true for 
 $f^{(p)}_{s_1,\ldots,s_n}$. 
 
 Secondly, we show that (ii) implies (i). From the conditionally negative 
 definiteness of $f^{(p)}_{s_1,\ldots,s_n}$ , it follows that $e^{-f^{(p)}_{s_1,\ldots,s_n}}$ 
 is positive definite on $[0,\infty)^n$ \citep[][Thm.\ 3.2.2]{berg-etal-84}.
 As $l_{s_1,\ldots,s_n}$ is non-negative and continuous, 
 $e^{-f^{(p)}_{s_1,\ldots,s_n}}$ is further bounded by $1$ and continuous. Thus, by 
 Thm.\ 4.4.7 in \citet{berg-etal-84}, there exists a unique finite measure 
 $\mu_{s_1,\ldots,s_n}$ on $[0,\infty)^n$ with Laplace transform
 \begin{align} \label{eq:laplace-mu}
 \mathcal{L}\mu_{s_1,\ldots,s_n}(x) = \int_{[0,\infty)^n} \exp\left(-\langle x, a\rangle\right) \mu(\rd a) = \exp\left(-f_{s_1,\ldots,s_n}(x)\right), \quad x \in [0,\infty)^n.
 \end{align}
 Because of $\mu_{s_1,\ldots,s_n}([0,\infty)^n) = \exp(-l_{s_1,\ldots,s_n}(0,\ldots,0)) = 1$,
 $\mu_{s_1,\ldots,s_n}$ is a probability measure. Further, 
 \begin{equation} \label{eq:stdf-proj}
  l_{s_1,\ldots,s_n}(x_1,\ldots,x_{i-1}, 0, x_{i+1}, \ldots,x_n) = l_{s_1,\ldots,s_{i-1},s_{i+1},\ldots,s_n}(x_1,\ldots,x_{i-1}, x_{i+1}, \ldots,x_n)
 \end{equation}
 for all $x = (x_1,\ldots,x_n) \in [0,\infty)^n$ and $i \in \{1,\ldots,n\}$ 
 implies that
 \begin{align*} 
  & \mu_{s_1,\ldots,s_n}(A_1 \times \ldots \times A_{i-1} \times [0,\infty) \times A_{i+1} \times \ldots \times A_n)\\
    ={}& \mu_{s_1,\ldots,s_{i-1},s_{i+1},\ldots,s_n}(A_1 \times \ldots \times A_{i-1} \times A_{i+1} \times \ldots \times A_n)
 \end{align*}   
 for all Borel sets $A_1,\ldots,A_n \subset [0,\infty)$ and $i \in \{1,\ldots,n\}$, 
 that is, the family $\{\mu_{s_1,\ldots,s_n}: \ s_1,\ldots,s_n \in S, \ n \in \NN\}$
 of probability measures satisfies the consistency conditions from Kolmogorov's
 existence theorem. Thus, there exists a stochastic process $\{T(s),\, s \in S\}$
 with finite-dimensional distributions $\mu_{\cdot}$.
 
 Now, let $\{U^{(p)}(s)\}_{s \in S}$ be a collection of independent $\Phi_p$ 
 random variables and 
 $$ \tilde X(s) = U^{(p)}(s) T(s)^{1/p}, \quad s \in S.$$
 Then, for all pairwise distinct $s_1,\ldots,s_n \in S$ and $x_1,\ldots,x_n >0$,
 we have
 \begin{align*}
     & \PP(\tilde X(s_1) \leq x_1, \ldots, \tilde X(s_n) \leq x_n) \\
  ={}& \EE\left\{ \PP\left(U^{(p)}(s_1) \leq \frac{x_1}{T^{1/p}(s_1)}, \ldots, U^{(p)}(s_n) \leq \frac{x_1}{T^{1/p}(s_n)} \, \bigg| \, T(s_1),\ldots,T(s_n)\right)\right\}\\
  ={}& \EE\left\{ \exp\left(- \sum_{i=1}^n \frac{T(s_i)}{x_i^p}\right) \right\},
 \end{align*}
 By Equation \eqref{eq:laplace-mu}, we obtain
 \begin{align*}
     \PP(\tilde X(s_1) \leq x_1, \ldots, \tilde X(s_n) \leq x_n) 
  ={}& \exp\left(-f^{(p)}_{s_1,\ldots,s_n}(x_1^{-p},\ldots,x_n^{-p})\right) \\
  ={}& \PP\left(X(s_1) \leq x_1, \ldots, X(s_n) \leq x_n\right).
 \end{align*}
 Thus, $X$ allows for the spectral representation
 \begin{align} \label{eq:repr-step1} 
   X(s) = U(s) T^{1/p}(s), \qquad s \in S.
 \end{align}
 Now, let $T^{(1)},\ldots, T^{(m)}$ be $m$ independent copies of $T$ for 
 $m \in \NN$. Then, for all $s_1,\ldots,s_n \in S$ and
 $x=(x_1,\ldots,x_n) \in [0,\infty)^n$, we have
 \begin{align*}
  \EE\left\{\exp\left(-\Bigl\langle x, \left(\sum\nolimits_{k=1}^m T^{(k)}(s_i)\right)_{i=1}^n \Bigr\rangle\right)\right\} 
  ={}& \left[\EE\left\{ \exp(-\langle x, (T(s_i))_{i=1}^n\rangle)\right\}\right]^m\\
  ={}& \exp(-m \cdot l_{s_1,\ldots,s_m}(x_1^{1/p},\ldots,x_n^{1/p})) \\
  ={}& \exp(-l_{s_1,\ldots,s_m}((m^p x_1)^{1/p},\ldots,(m^p x_n)^{1/p}))\\
  ={}& \EE\left\{ \exp(\langle x, m^p (T(s_i))_{i=1}^n\rangle)\right\},
 \end{align*}
 where we used the homogeneity of the stable tail dependence function. Hence, 
 for all $s_1,\ldots,s_n \in S$, the vectors $\left(\sum\nolimits_{k=1}^m T^{(k)}(s_i)\right)_{i=1}^n$
 and $m^p (T(s_i))_{i=1}^n$ have the same distribution, i.e.\ $\{T(s), \, s \in S\}$
 is an $\alpha$-stable process with $\alpha=1/p$. Thus, from Thm.\ 13.1.2 and
 Thm.\ 3.10.1 in \citet{samo-taqqu-94}, we can deduce that $\{T(s), \, s \in S\}$
 allows for the representation 
 \begin{align} \label{eq:repr-step2} 
 T(s) = \frac{1}{\Gamma(1-p^{-1})^p} \sum_{i \in \NN} A_i^{p} \tilde W_i(s), \quad s \in S,
 \end{align}
 where $\{A_i\}_{i \in \NN}$ are the points of a Poisson point process on
 $[0,\infty)$ with intensity $a^{-2}\rd a$ and $\{\tilde W_i(s), \, s \in S\}$
 are independent and identically distributed stochastic processes which are
 independent from $\{A_i\}_{i \in \NN}$ and satisfy $\EE(\tilde W_i(s)^{1/p})
 = l_s(1) = 1$ for all $s \in S$. Defining $W_i^{(p)}(s) = \tilde W_i(s)^{1/p}$,
 $s \in S$, $i \in \NN$, and plugging Equation \eqref{eq:repr-step2} into
 Equation \eqref{eq:repr-step1}, we obtain Equation \eqref{eq:p-repr}.
\end{proof}
\medskip

\begin{remark}
 Note that Theorem \ref{thm:character} assumes that, for each $s_1,\ldots,s_n \in S$,
 $\ell_{s_1,\ldots,s_n}$ is the stable tail dependence function of the simple 
 max-stable vector $(X(s_1),\ldots,X(s_n))^\top$. The conditional negative 
 definiteness of the function $f^{(p)}_{s_1,\ldots,s_n}$ is an additional 
 condition. In particular, it is  always satisfied for $p=\infty$ -- i.e.\ any 
 simple max-stable process allows for de Haan's \citep{dehaan84} spectral
 representation \eqref{eq:spec-repr} -- as $f^{(\infty)}_{s_1,\ldots,s_n} =
 l_{s_1,\ldots,s_n}(1,\ldots,1)$ is always conditionally negative definite. 
 
 In order to check whether a function $l_{s_1,\ldots,s_n}$ is the stable tail
 dependence function of some process $X$ with an $\ell^p$ norm based representation,
 we first need to ensure that $l_{s_1,\ldots,s_n}$ is a valid stable tail
 dependence function. This can be done by checking necessary and sufficient conditions
 given in \citet{molchanov08} and \citet{ressel13}, for instance.
\end{remark}
\medskip

Using an integral representation of continuous conditionally negative definite
functions on $[0,\infty)^n$ \citep[cf.][Paragraph 4.4.6]{berg-etal-84}, condition
(ii) in Theorem \ref{thm:character} can be reformulated yielding the following
corollary.

\begin{corollary}
 For a simple max-stable process $\{X(s), \ s \in S\}$ and $p \in (1,\infty)$, 
 the following statements are equivalent:
 \begin{itemize}
  \item[(i)]  $X$ possesses a $\ell^p$ norm based representation \eqref{eq:p-repr}.
  \item[(ii)] For all pairwise distinct $s_1,\ldots,s_n \in S$ and $n \in \NN$, there
        exist a vector $c(s_1,\ldots,s_n) = (c_1(s_1,\ldots,s_n),\ldots,c_n(s_1,\ldots,s_n))^\top \in [0,\infty)^n$         
        and a Radon measure $\mu_{s_1,\ldots,s_n}$ on $[0,\infty)^n$ such that
        the stable tail dependence function $l_{s_1,\ldots,s_n}$ satisfies
        $$ l_{s_1,\ldots,s_n}(x) = \sum_{i=1}^n c_i(s_1,\ldots,s_n) \cdot x_i^p 
                                              + \int_{[0,\infty)^n} \left\{1 - \exp\left(-\sum_{i=1}^n a_i x_i^p\right)\right\} \mu_{s_1,\ldots, s_n}(\mathrm{d}a),$$
        for all $x=(x_1,\ldots,x_n)^\top \in [0,\infty)^n$.  
 \end{itemize}
\end{corollary}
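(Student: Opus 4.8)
The plan is to read off the corollary from Theorem \ref{thm:character} by inserting the L\'evy--Khinchine-type integral representation of continuous conditionally negative definite functions on the additive semigroup $[0,\infty)^n$ into its condition (ii). Throughout, I fix pairwise distinct $s_1,\ldots,s_n \in S$ and abbreviate $l = l_{s_1,\ldots,s_n}$ and $f = f^{(p)}_{s_1,\ldots,s_n}$, so that $f(y) = l(y_1^{1/p},\ldots,y_n^{1/p})$ and, inverting the substitution, $l(x) = f(x_1^p,\ldots,x_n^p)$. By Theorem \ref{thm:character}, statement (i) is equivalent to $f$ being conditionally negative definite on $[0,\infty)^n$ for every such choice of points. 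First I would record the two regularity properties of $f$ needed to invoke the representation theorem: $f$ is continuous, since $l$ is continuous (as observed before the theorem) and $y\mapsto y^{1/p}$ is continuous, and $f(0) = l(0,\ldots,0) = 0$ by the degree-one homogeneity of the stable tail dependence function.

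For the implication (i) $\Rightarrow$ (ii), I would apply the integral representation for continuous conditionally negative definite functions on $[0,\infty)^n$ \citep[Paragraph 4.4.6]{berg-etal-84} to $f$. As $f(0) = 0$, the constant term vanishes and the representation becomes
\begin{equation*}
  f(y) = \langle c, y\rangle + \int_{[0,\infty)^n} \left\{ 1 - \exp\left(-\langle a, y\rangle\right)\right\} \mu(\rd a), \qquad y \in [0,\infty)^n,
\end{equation*}
for some $c = (c_1,\ldots,c_n) \in [0,\infty)^n$ and some Radon measure $\mu = \mu_{s_1,\ldots,s_n}$ on $[0,\infty)^n$. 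Substituting $y_i = x_i^p$ and using $l(x) = f(x_1^p,\ldots,x_n^p)$ turns this into exactly the claimed formula for $l_{s_1,\ldots,s_n}$, with the linear part becoming $\sum_{i=1}^n c_i x_i^p$ and the exponent in the integrand becoming $-\sum_{i=1}^n a_i x_i^p$.

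For the converse (ii) $\Rightarrow$ (i), I would reverse this computation: the assumed representation of $l$ gives $f(y) = \langle c, y\rangle + \int_{[0,\infty)^n}\{1 - \exp(-\langle a, y\rangle)\}\mu(\rd a)$, and it remains to check that this $f$ is conditionally negative definite. The linear map $y \mapsto \langle c, y\rangle$ with $c \geq 0$ is conditionally negative definite; for each fixed $a$, the map $y \mapsto 1 - \exp(-\langle a, y\rangle)$ is conditionally negative definite because $y\mapsto \exp(-\langle a, y\rangle)$ is a positive definite bounded semicharacter normalized to $1$ at the origin; and integrating nonnegatively against $\mu$ preserves conditional negative definiteness. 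Hence $f$ is conditionally negative definite for every choice of $s_1,\ldots,s_n$, and Theorem \ref{thm:character} delivers (i).

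I expect the crux to be matching the abstract representation theorem to the precise form asserted, namely justifying that no additional terms occur. On a general abelian semigroup the representation of a negative definite function may carry a quadratic (Gaussian) form and contributions from unbounded semicharacters; the content is that on $([0,\infty)^n,+)$ with the identity involution -- the multivariate counterpart of the Bernstein-function setting -- only the drift $\langle c, \cdot\rangle$ and the jump integral against the bounded semicharacters $e^{-\langle a, \cdot\rangle}$ survive. I would pin this down using the nonnegativity of $f$ together with its sublinear growth $f(y) = O(\|y\|^{1/p})$ for $p > 1$, inherited from the degree-one homogeneity of $l$; this rules out any quadratic contribution and simultaneously controls the mass of $\mu$ near the origin, so that the displayed integral converges for every $y \in [0,\infty)^n$.
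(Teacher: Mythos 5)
Your proposal is correct and takes essentially the same route as the paper, which obtains the corollary precisely by inserting the integral representation of continuous conditionally negative definite functions on $[0,\infty)^n$ \citep[Paragraph 4.4.6]{berg-etal-84} into condition (ii) of Theorem \ref{thm:character} via the substitution $y_i = x_i^p$. Your added verifications --- $f(0)=0$ killing the constant term, the converse by direct conditional negative definiteness of the drift plus jump integral, and the nonnegativity/sublinear-growth argument excluding extraneous terms --- merely fill in details the paper delegates to the cited reference.
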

\medskip

From the characterization given in Theorem \ref{thm:character}, we can deduce
necessary conditions on the dependence structure of a max-stable process with 
$\ell^p$ norm based representation \eqref{eq:p-repr} in terms of its extremal
coefficients:  
For a general simple max-stable process $\{X(s), \, s \in S\}$ and a finite set
$\tilde S=\{s_1,\ldots,s_n\} \subset S$, let the extremal coefficient 
$\theta(\tilde S)$ be defined via
$$ \PP\left( \max_{s \in S} X(s) \leq x\right) = \exp\left(-\frac{\theta(\tilde S)} x\right), \quad x >0$$
Then, we necessarily have $\theta(\tilde S) \in [1,n]$ where 
$\theta(\tilde S)=n$ if and only if $X(s_1), \ldots, X(s_n)$ are independent 
and $\theta(\tilde S)=1$ if and only if $X(s_1)=X(s_2)=\ldots=X(s_n)$ a.s. The
extremal coefficient is closely connected to the stable tail dependence 
function via the relation
$$ \theta(\{s_1,\ldots,s_n\}) = l_{s_1,\ldots,s_n}(1,\ldots,1).$$ 
If $X$ further allows for an $\ell^p$ norm based representation 
\eqref{eq:p-repr}, we obtain the following condition.

\begin{proposition} \label{prop:gen-bound-ecf}
 Let $\{X(s), \, s \in S\}$ be a simple max-stable process with representation
 \eqref{eq:p-repr} and $S_1, S_2 \subset S$ be finite and disjoint. Then, we 
 have
 $$ \theta(S_1 \cup S_2) \geq 2^{1/p}\frac{\theta(S_1) + \theta(S_2)}{2}.$$
\end{proposition}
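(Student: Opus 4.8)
The plan is to express both the extremal coefficients through the spectral process $W^{(p)}$ of representation \eqref{eq:p-repr} and then reduce the claim to a pointwise power-mean inequality. Recall from the identity \eqref{eq:fidi}, as exploited in the proof of Theorem \ref{thm:character}, that for any finite set $\tilde S = \{s_1,\ldots,s_n\} \subset S$ the extremal coefficient admits the representation
\begin{equation*}
 \theta(\tilde S) = l_{s_1,\ldots,s_n}(1,\ldots,1) = \EE\left\{\left(\sum\nolimits_{s \in \tilde S} W^{(p)}(s)^p\right)^{1/p}\right\}.
\end{equation*}
First I would record this formula for each of $S_1$, $S_2$ and $S_1 \cup S_2$, introducing the nonnegative random variables $U = \left(\sum_{s \in S_1} W^{(p)}(s)^p\right)^{1/p}$ and $V = \left(\sum_{s \in S_2} W^{(p)}(s)^p\right)^{1/p}$, so that $\theta(S_1) = \EE(U)$ and $\theta(S_2) = \EE(V)$.

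The key structural observation is that, because $S_1$ and $S_2$ are disjoint, the sum over $S_1 \cup S_2$ splits additively at the level of $p$-th powers, which gives
\begin{equation*}
 \theta(S_1 \cup S_2) = \EE\left\{\left(\sum\nolimits_{s \in S_1} W^{(p)}(s)^p + \sum\nolimits_{s \in S_2} W^{(p)}(s)^p\right)^{1/p}\right\} = \EE\left\{(U^p + V^p)^{1/p}\right\}.
\end{equation*}
Next I would invoke, for each realization, the generalized (power) mean inequality: since $p > 1$, for all $a,b \geq 0$ one has $\left((a^p+b^p)/2\right)^{1/p} \geq (a+b)/2$, equivalently $(a^p+b^p)^{1/p} \geq 2^{1/p-1}(a+b)$. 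Applying this pointwise with $a = U$ and $b = V$ and then taking expectations, linearity yields
\begin{equation*}
 \theta(S_1 \cup S_2) = \EE\left\{(U^p+V^p)^{1/p}\right\} \geq 2^{1/p-1}\left(\EE(U) + \EE(V)\right) = 2^{1/p}\frac{\theta(S_1)+\theta(S_2)}{2},
\end{equation*}
which is the asserted bound.

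The argument is short, so the two inputs deserve more care than any genuine obstacle. The first is the justification of the extremal-coefficient formula: it is the specialization of \eqref{eq:fidi} to $x_1 = \cdots = x_n = 1$ together with the definition of $\theta$, and it is precisely the existence of representation \eqref{eq:p-repr} that makes this expectation form available. The second is the disjointness hypothesis, which is essential rather than cosmetic: it is exactly what permits the additive decomposition of $\sum_{s \in S_1 \cup S_2} W^{(p)}(s)^p$ and hence the clean reduction to $(U^p+V^p)^{1/p}$, whereas overlapping index sets would double-count the shared coordinates and break the splitting. Beyond these, the power-mean inequality is classical, so I expect no substantive difficulty.
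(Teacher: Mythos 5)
Your proof is correct, and it takes a genuinely different route from the paper's. The paper obtains the bound as a corollary of Theorem \ref{thm:character}: membership in $\mathcal{MS}_p$ makes $x \mapsto l_{s_1,\ldots,s_{k_1+k_2}}(x_1^{1/p},\ldots,x_{k_1+k_2}^{1/p})$ conditionally negative definite, and the paper specializes \eqref{eq:neg-def} to $m=2$, $a_1=1$, $a_2=-1$, with $x^{(1)}$ and $x^{(2)}$ the indicator vectors of $S_1$ and $S_2$; homogeneity and the projection property \eqref{eq:stdf-proj} of the stable tail dependence function then turn the resulting three-term inequality into the assertion. You instead go straight to the expectation formula $\theta(\tilde S)=\EE\{(\sum_{s\in\tilde S}W^{(p)}(s)^p)^{1/p}\}$ obtained by specializing \eqref{eq:fidi}, use disjointness to split the $\ell^p$ sum as $U^p+V^p$, and conclude with the pointwise power-mean inequality $(a^p+b^p)^{1/p}\geq 2^{1/p-1}(a+b)$; all steps are valid, and your identification of disjointness as the load-bearing hypothesis is accurate. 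The trade-off: the paper's argument operates purely at the level of the stable tail dependence function, exhibiting the bound as one instance of a whole family of necessary conditions generated by varying the test configuration in \eqref{eq:neg-def}, and it applies to any process known only to satisfy condition (ii) of Theorem \ref{thm:character} without first producing the spectral process $W^{(p)}$; your argument is more elementary, avoids the negative-definiteness and Bernstein-function machinery entirely, makes the equality case visible (equality requires $U=V$ almost surely, e.g.\ the logistic model with $|S_1|=|S_2|$), and generalizes immediately to $k$ pairwise disjoint sets, yielding $\theta\bigl(\bigcup_{j=1}^k S_j\bigr)\geq k^{1/p-1}\sum_{j=1}^k\theta(S_j)$. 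One minor caveat: \eqref{eq:fidi} is derived for $p\in(1,\infty)$, so, like the paper's proof via Theorem \ref{thm:character}, your argument covers the finite-$p$ case; for $p=\infty$ the claimed bound is immediate since $2^{1/p}=1$ and the maximum over $S_1\cup S_2$ dominates the average of the two maxima.
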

\begin{proof}
 Let $S_1 = \{s_1, s_2, \ldots, s_{k_1}\}$ and $S_2 = \{s_{k_1+1}, \ldots, 
 s_{k_1+k_2}\}$ and let $\{e_1,\ldots,e_{k_1+k_2}\}$ denote the standard basis
 in $\RR^{k_1+k_2}$. As the function $(x_1,\ldots,x_{k_1+k_2}) \mapsto 
 l_{s_1,\ldots,s_{k_1+k_2}}(x_1^{1/p},\ldots,x_{k_1+k_2}^{1/p})$ is conditionally 
 negative definite by Theorem \ref{thm:character}, inequality \eqref{eq:neg-def}
 particularly holds true for $n=2$, $a_1=1$, $a_2=-1$, 
 $x^{(1)} = \sum_{i=1}^{k_1} e_i$ and $x^{(2)} = \sum_{i=k_1+1}^{k_1+k_2} e_i$,
 i.e.\
 $$  l_{s_1,\ldots,s_{k_1+k_2}}\left(2^{1/p}\sum\nolimits_{i=1}^{k_1} e_i\right) 
  +  l_{s_1,\ldots,s_{k_1+k_2}}\left(2^{1/p}\sum\nolimits_{i=k_1+1}^{k_1+k_2} e_i\right) 
  -2 l_{s_1,\ldots,s_{k_1+k_2}}\left(\sum\nolimits_{i=1}^{k_1+k_2} e_i\right) \leq 0.$$
 Using the homogeneity and property \eqref{eq:stdf-proj} of the stable tail
 dependence function, we obtain
  $$ 2^{1/p} l_{s_1,\ldots,s_{k_1}}(1,\ldots,1) + 2^{1/p} l_{s_{k_1+1},\ldots,s_{k_1+k_2}}(1,\ldots,1) 
        - 2 l_{s_1,\ldots,s_{k_1+k_2}}(1,\ldots,1) \leq 0.$$
 As $\theta(\tilde S) = l_{\tilde S}(1,\ldots,1)$ for any finite $\tilde S 
 \subset S$, this yields the assertion.       
\end{proof}
\medskip

Of particular interest in extreme value analysis is the case of the pairwise
extremal coefficient function \citep[cf.][]{smith90,schlather-tawn-03} where
$\tilde S = \{s_1,s_2\}$. Then, Proposition \ref{prop:gen-bound-ecf} provides 
the lower bound
\begin{equation} \label{eq:easy-bound-ec}
 \theta(\{s_1,s_2\}) \geq 2^{1/p} \qquad \text{for all } s_1 \neq s_2 \in S.
\end{equation}
For the particular case of model \eqref{eq:reich-shaby}, this bound has already
been found by \citet{reich-shaby-12} motivating their interpretation of model 
\eqref{eq:reich-shaby} as a max-stable process with nugget effect in analogy to
the Gaussian case.
\medskip

The bound \eqref{eq:easy-bound-ec} and the characterization of simple max-stable processes with a $\ell^p$ 
norm based representation given in Theorem \ref{thm:character} can be used to show
the existence of a \emph{minimal} $\ell^p$ norm based representation of a simple
max-stable process $X$, i.e.\ the existence of some $p_{\min}(X)$ such that 
$X \in \mathcal{MS}_p$ if and only if $p \geq p_{\min}(X)$.

\begin{corollary} \label{coro:min-repr}
 Let $\{X(s), \, s \in S\}$ be a simple max-stable process such that not all
 random variables $\{X(s)\}_{s \in S}$ are independent. Then, there exists
 a number $p_{\min}(X) \in (1,\infty]$ such that $X \in \mathcal{MS}_p$ if and
 only if $p \geq p_{\min}(X)$.
\end{corollary}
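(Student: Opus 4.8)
The plan is to analyze the set $P(X) := \{p \in (1,\infty] : X \in \mathcal{MS}_p\}$ and show that it is a closed up-interval of the form $[p_{\min},\infty]$ with $p_{\min} \in (1,\infty]$. The inclusions $\mathcal{MS}_p \subset \mathcal{MS}_q$ for $1 < p < q < \infty$ together with $\mathcal{MS}_\infty = \mathcal{MS}$, established in Proposition \ref{prop:trafo-repr}, say precisely that $P(X)$ is an up-set: if $p \in P(X)$ and $q \geq p$, then $q \in P(X)$. Since every simple max-stable process lies in $\mathcal{MS} = \mathcal{MS}_\infty$, the set $P(X)$ is non-empty (it contains $\infty$), so I may set $p_{\min}(X) := \inf P(X) \in [1,\infty]$. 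Being an up-set in the totally ordered range $(1,\infty]$, $P(X)$ equals either $(p_{\min},\infty]$ or $[p_{\min},\infty]$, and the up-set property already gives $(p_{\min},\infty] \subseteq P(X)$. It therefore remains to prove (a) that $p_{\min} > 1$, and (b) that the infimum is attained, i.e.\ $p_{\min} \in P(X)$ when $p_{\min} < \infty$.

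For (a) I would invoke the pairwise bound \eqref{eq:easy-bound-ec}. Since the variables $\{X(s)\}_{s \in S}$ are not all independent and, for max-stable laws, pairwise independence is equivalent to mutual independence, there exist $s_1 \neq s_2$ with $\theta_0 := \theta(\{s_1,s_2\}) < 2$. If $X \in \mathcal{MS}_p$, then \eqref{eq:easy-bound-ec} forces $2^{1/p} \leq \theta_0$, hence $p \geq 1/\log_2\theta_0 =: p_0$. As $\theta_0 \in [1,2)$ one checks that $p_0 \in (1,\infty]$ (with $p_0 = \infty$ in the degenerate case $\theta_0 = 1$), so that $P(X) \subseteq [p_0,\infty]$ and consequently $p_{\min} = \inf P(X) \geq p_0 > 1$.

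For (b) I would assume $p_{\min} < \infty$ (if $p_{\min} = \infty$ then $P(X) = \{\infty\}$ and there is nothing left to show) and choose a sequence $p_k \searrow p_{\min}$ with $p_k \in P(X)$, which is possible because $(p_{\min},\infty] \subseteq P(X)$. Fix pairwise distinct $s_1,\ldots,s_n \in S$. By Theorem \ref{thm:character}, each $f^{(p_k)}_{s_1,\ldots,s_n}$ is conditionally negative definite on $[0,\infty)^n$. Since $l_{s_1,\ldots,s_n}$ is continuous and $x_i^{1/p_k} \to x_i^{1/p_{\min}}$ for every $x_i \geq 0$, one gets the pointwise convergence $f^{(p_k)}_{s_1,\ldots,s_n}(x) \to f^{(p_{\min})}_{s_1,\ldots,s_n}(x)$ for all $x \in [0,\infty)^n$. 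Conditional negative definiteness is expressed through the inequalities \eqref{eq:neg-def}, each of which involves only finitely many function values, so passing to the limit in every such finite inequality preserves its sign. Hence $f^{(p_{\min})}_{s_1,\ldots,s_n}$ is conditionally negative definite as well, and since $s_1,\ldots,s_n$ were arbitrary, Theorem \ref{thm:character} yields $X \in \mathcal{MS}_{p_{\min}}$, i.e.\ $p_{\min} \in P(X)$.

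Combining (a) and (b) gives $P(X) = [p_{\min},\infty]$ with $p_{\min} \in (1,\infty]$, which is exactly the assertion. I expect the main obstacle to be step (b): the monotonicity of Proposition \ref{prop:trafo-repr} makes $P(X)$ an up-set for free, but it does not guarantee that the boundary value $p_{\min}$ itself lies in $P(X)$. Closing the interval at its left endpoint is the real content, and it relies on combining two ingredients from the earlier development, namely the characterization in Theorem \ref{thm:character} and the continuity of the stable tail dependence function recorded after \eqref{eq:stdf-spec}; the role of the hypothesis that not all $\{X(s)\}_{s \in S}$ are independent is precisely to secure the strict lower bound $p_{\min} > 1$ in step (a), without which $\inf P(X) = 1$ could fail to be a minimum.
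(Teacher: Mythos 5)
Your proof is correct and takes essentially the same route as the paper's: you obtain the strict lower bound $p_{\min}(X)>1$ from the pairwise bound \eqref{eq:easy-bound-ec} applied to a dependent pair (making explicit the fact, left implicit in the paper, that pairwise independence implies mutual independence for max-stable laws), and you close the interval at its left endpoint by passing to the limit in the finitely many inequalities \eqref{eq:neg-def} using the continuity of the stable tail dependence function together with both directions of Theorem \ref{thm:character}. This matches the paper's argument step for step, with the monotonicity from Proposition \ref{prop:trafo-repr} playing the same role in both.
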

\begin{proof}
 By \citet{dehaan84}, any simple max-stable process $X$ satisfies 
 $X \in \mathcal{MS}_\infty$. Thus, the assertion follows directly if
 $$ p_{\min}(X) = \inf \{p > 1: \, X \in \mathcal{MS}_p\} = \infty.$$
 Thus, we restrict ourselves to the case that $p_{\min}(X) < \infty$. As not 
 all the random variables $\{X(s)\}_{s \in S}$ are independent, there exist 
 $s_1, s_2 \in S$ and $\varepsilon > 0$ such that 
 $\theta(\{s_1,s_2\}) < 2^{1/(1+\varepsilon)}$. Hence, by Equation
 \eqref{eq:easy-bound-ec}, we obtain that $p_{\min}(X) \geq 1 + \varepsilon$.
 Using the fact that $\mathcal{MS}_p \subset \mathcal{MS}_q$ for $p<q$, it 
 remains to show that $X \in \mathcal{MS}_{p_{\min}(X)}$. By Theorem 
 \ref{thm:character}, for all pairwise distinct $s_1,\ldots, s_n \in S$, 
 $n \in \NN$, $a_1,\ldots,a_m \in \RR$ such that $\sum_{i=1}^m a_i=0$, 
 $x^{(1)},\ldots,x^{(m)} \in [0,\infty)^n$ and $m \in \NN$ we have that
 $$ \sum_{i=1}^m \sum_{j=1}^n a_i a_j l_{s_1,\ldots,s_n}((x_1^{(i)}+x_1^{(j)})^{1/p},\ldots,(x_n^{(i)}+x_n^{(j)})^{1/p}) \leq 0$$
 for all $p > p_{\min}(X)$. By the continuity of $l_{s_1,\ldots,s_m}$, the same 
 holds true for $p=p_{\min}(X)$, and, thus, by Theorem \ref{thm:character},
 $X \in \mathcal{MS}_{p_{\min}(X)}$.
\end{proof}
\medskip

For any $p \in (1,\infty]$, we now give an example for a simple max-stable 
process $X^{(p)}$ such that $p_{\min}(X^{(p)}) = p$. Thus, we will also see
that
$$\mathcal{MS}_p \subsetneq \mathcal{MS}_q \subsetneq \mathcal{MS}_\infty = \mathcal{MS}, \quad 1 < p < q <\infty.$$
We consider the process $X_{\log}^{(p)} \in \mathcal{MS}_p$ which possesses
an $\ell^p$ norm based representation \eqref{eq:p-repr} with $W(s)=1$ a.s.\ for
all $s \in S$. From Equation \eqref{eq:fidi}, for pairwise distinct $s_1,\ldots,s_n \in S$,
we obtain the finite-dimensional distributions
\begin{equation} \label{eq:logistic}
 \PP\left(X_{\log}^{(p)}(s_i) \leq x_i, \ 1 \leq i \leq n\right) = \exp\left\{-\left(\sum\nolimits_{i=1}^n x_i^{-p}\right)^{1/p}\right\}, \qquad x_1,\ldots,x_n>0,
\end{equation}
i.e.\ all the multivariate distributions are multivariate logistic 
distributions \citep{gumbel60}. Thus, the process $X_{\log}^{(p)}$ has pairwise
extremal coefficients $\theta(s,t) = 2^{1/p}$ for all $s,t \in S$, $s \neq t$. 
From Equation \eqref{eq:easy-bound-ec}, it follows that $X_{\log}^{(p)} \notin \mathcal{MS}_{p'}$
for $p'<p$. Consequently, we have $p_{\min}(X_{\log}^{(p)}) = p$.

\section{Properties of Processes with $\ell^p$ Norm Based Representation} \label{sec:dependence}

In this section, we will analyze several properties of simple max-stable 
processes with an $\ell^p$ norm based representation in more detail. We will 
particularly focus on properties related to the dependence structure of the
process such as stationarity, ergodicity and mixing. A characteristic feature
of a process $X$ with $\ell^p$ norm based representation \eqref{eq:p-repr} is 
the additional noise introduced via the process $\{U^{(p)}(s), \ s \in S\}$.
Thus, we will compare the process $X$ to a ``denoised'' reference process
$$ \overline{X}(s) = \max_{i \in \NN} A_i W_i^{(p)}(s), \quad s \in S,$$
i.e.\ the simple max-stable process constructed via the same spectral functions
used in the original ($\ell^\infty$ norm based) spectral representation \eqref{eq:spec-repr}.

\begin{proposition}  \label{prop:ec-bounds}
 Let $\{X(s), \ s \in S\}$ be a simple max-stable process with $\ell^p$ norm 
 based representation \eqref{eq:p-repr} with $p \in (1\infty]$. Then, for the 
 pairwise extremal coefficients $\theta(\{s_1,s_2\})$, we obtain the bounds:
 $$ \EE\left(\max\{W^{(p)}(s_1),W^{(p)}(s_2)\}\right) \leq \theta(\{s_1,s_2\}) \leq 2^{1/p} \left[\EE\left(\max\{W^{(p)}(s_1),W^{(p)}(s_2)\}\right)\right]^{1-p^{-1}}.$$
\end{proposition}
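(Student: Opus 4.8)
The plan is to start from the closed form of the finite-dimensional distributions derived in \eqref{eq:fidi}. Specializing to $n=2$ and evaluating the stable tail dependence function at $(1,1)$, the relation $\theta(\{s_1,s_2\}) = l_{s_1,s_2}(1,1)$ yields the exact expression
$$ \theta(\{s_1,s_2\}) = \EE\left( \left( W^{(p)}(s_1)^p + W^{(p)}(s_2)^p\right)^{1/p}\right), $$
so both bounds reduce to estimating this single expectation. Throughout I write $M = \max\{W^{(p)}(s_1), W^{(p)}(s_2)\}$ and recall the normalization $\EE\{W^{(p)}(s_i)\} = 1$. The boundary case $p = \infty$ is immediate, since then the $\ell^p$ norm is the maximum and both bounds collapse to the identity $\theta(\{s_1,s_2\}) = \EE(M)$; I treat $p \in (1,\infty)$ below.

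The lower bound is the easy direction. For non-negative reals $a, b$ one has $\max\{a,b\}^p \le a^p + b^p$, hence $\max\{a,b\} \le (a^p + b^p)^{1/p}$ pointwise. Applying this to $W^{(p)}(s_1)$ and $W^{(p)}(s_2)$ and taking expectations gives $\EE(M) \le \theta(\{s_1,s_2\})$.

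For the upper bound, the key observation is the one-sided pointwise estimate $W^{(p)}(s_i)^p = W^{(p)}(s_i) \cdot W^{(p)}(s_i)^{p-1} \le M^{p-1} \cdot W^{(p)}(s_i)$, which upon summation yields
$$ W^{(p)}(s_1)^p + W^{(p)}(s_2)^p \le M^{p-1}\left(W^{(p)}(s_1) + W^{(p)}(s_2)\right). $$
Taking $p$-th roots, the integrand is bounded by $M^{(p-1)/p}\,(W^{(p)}(s_1) + W^{(p)}(s_2))^{1/p}$. I then apply Hölder's inequality with the conjugate exponents $p/(p-1)$ and $p$ to split the expectation of this product: the first factor contributes $(\EE(M))^{(p-1)/p}$, and the second contributes $(\EE(W^{(p)}(s_1)) + \EE(W^{(p)}(s_2)))^{1/p} = 2^{1/p}$, where the last equality invokes the normalization $\EE\{W^{(p)}(s_i)\} = 1$. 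Combining gives $\theta(\{s_1,s_2\}) \le 2^{1/p}\,(\EE(M))^{1-p^{-1}}$, as claimed.

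The only genuinely delicate point is producing the exponent $1 - p^{-1}$ in the upper bound; the naive estimate $W^{(p)}(s_1)^p + W^{(p)}(s_2)^p \le 2 M^p$ would give the wrong power of $\EE(M)$. The crucial step is therefore the bound $W^{(p)}(s_i)^{p-1} \le M^{p-1}$, which converts exactly one factor of $W^{(p)}(s_i)$ into $M$ and leaves a single linear factor behind. This is precisely what makes the Hölder split produce $\EE(M)$ raised to the power $(p-1)/p$ while the normalization turns the remaining factor into the constant $2^{1/p}$.
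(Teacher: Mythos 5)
Your proof is correct and follows essentially the same route as the paper's: the exact identity $\theta(\{s_1,s_2\}) = \EE\bigl\{\bigl(W^{(p)}(s_1)^p + W^{(p)}(s_2)^p\bigr)^{1/p}\bigr\}$, the pointwise comparison of maximum and $\ell^p$ norm for the lower bound, and the interpolation inequality $\|\bw\|_p \leq \|\bw\|_1^{1/p} \cdot \|\bw\|_\infty^{1-p^{-1}}$ combined with H\"older's inequality and the normalization $\EE\{W^{(p)}(s_i)\}=1$ for the upper bound. The only difference is that where the paper cites Theorem 18 of Hardy--Littlewood--P\'olya and passes to the limit $r \to \infty$ to obtain that interpolation inequality, you derive it directly from the elementary pointwise estimate $W^{(p)}(s_i)^p \leq M^{p-1}\, W^{(p)}(s_i)$, which is a self-contained (and arguably cleaner) way to reach the identical key inequality.
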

\begin{proof}
 In the case $p=\infty$, we have
 $$ \theta(\{s_1,s_2\}) = \EE\left(\max\{W^{(p)}(s_1),W^{(p)}(s_2)\}\right),$$
 which equals both the lower and the upper bound given in the assertion.
 
 Now, let $p \in (1,\infty)$. Then, we have the lower bound
 \begin{align*}
  \theta(\{s_1,s_2\}) {}={} \EE\left\{\left(W^{(p)}(s_1)^p+W^{(p)}(s_2)^p\right)^{1/p}\right\} 
                   {}\geq{} \EE\left(\max\{W^{(p)}(s_1),W^{(p)}(s_2)\}\right).
 \end{align*}
 Further, for any $p < r < \infty$ and $\bw \in [0,\infty)^2$, we obtain
 $$ \|\bw\|^p_p \leq \|\bw\|_1^{\frac{r-p}{r-1}} \cdot \|\bw\|_r^{r\frac{p-1}{r-1}}$$
 \citep[cf.][Thm.~18]{HLP52}, or equivalently
 $$ \|\bw\|_p \leq \|\bw\|_1^{\frac{1}{p}\frac{r-p}{r-1}} \cdot \|\bw\|_r^{\frac{1-p^{-1}}{1-r^{-1}}}.$$
 As $r \to \infty$, this yields
 $$ \|\bw\|_p \leq \|\bw\|_1^{1/p} \cdot \|\bw\|_\infty^{1-p^{-1}}.$$
 Taking the expectation of $\bw$ with respect to the joint distribution of
 $W^{(p)}(s_1)$ and $W^{(p)}(s_2)$ and applying H\"older's inequality, we obtain the upper
 bound
 \begin{align*}
  \theta(\{s_1,s_2\}) ={}& \EE\left\{\left(W^{(p)}(s_1)^p+W^{(p)}(s_2)^p\right)^{1/p}\right\} \\
                   \leq{}& \EE\left\{\left(W^{(p)}(s_1)+W^{(p)}(s_2)\right)^{1/p} \cdot \max\{W^{(p)}(s_1),W^{(p)}(s_2)\}^{1-p^{-1}}\right\}\\
                   \leq{}& \left[\EE\left\{W^{(p)}(s_1)+W^{(p)}(s_2)\right\}\right]^{1/p}  \left[\EE\left(\max\{W^{(p)}(s_1),W^{(p)}(s_2)\}\right)\right]^{1-p^{-1}}. 
 \end{align*} 
 The assertion follows from $\EE\{W^{(p)}(s_1)\} = \EE\{W^{(p)}(s_2)\} = 1$. 
\end{proof}
\medskip

Note that Proposition \ref{prop:ec-bounds} relates the extremal coefficients
$\theta(\{s_1,s_2\})$, $s_1,s_2 \in S$, to the terms
$\EE\left(\max\{W^{(p)}(s_1),W^{(p)}(s_2)\}\right)$ which are the extremal coefficients of 
the process
$$ \overline{X}(s) = \max_{i \in \NN} A_i W^{(p)}(s), \qquad s \in S.$$
As the processes $X$ and $\overline X$ just differ by the Fr\'echet noise
process $U^{(p)}$, we will call $\overline X$ the denoised max-stable
process associated to $X$. From Proposition \ref{prop:ec-bounds}, we obtain 
that extremal dependence of the process $X$ is always weaker than dependence of
the associated denoised process -- as expected.
\medskip

In the following, we will consider the case that $S=\ZZ$. In this case, 
properties such as stationarity, ergodicity or mixing are of interest. For a 
simple max-stable $\{X(s),\,s \in \ZZ\}$ with representation 
\eqref{eq:spec-repr}, necessary and sufficient conditions for these properties
can be expressed in terms of the distribution of the spectral function $V$:
By \citet{KSH09}, $X$ is stationary if and only if
\begin{equation} \label{eq:stationarity}
 \EE\left\{ V(s_1)^{u_1} \cdot \ldots \cdot V(s_n)^{u_n} \right\} = \EE \left\{ V(s_1+s)^{u_1} \cdot \ldots \cdot V(s_n+s)^{u_n} \right\}
\end{equation}
for all $n \in \NN$, $s, s_1,\ldots,s_n \in \ZZ$ and $u_1,\ldots,U_n \in [0,1]$ 
such that $\sum_{i=1}^n u_i = 1$. 
For stationary simple max-stable processes, \citet{kabluchko-schlather-10} give
conditions for ergodicity and mixing in terms of the pairwise extremal 
coefficients $\theta(\{s_1,s_2\}) = \EE(\max\{V(s_1),V(s_2)\})$, stating that
$X$ is mixing if and only 
\begin{equation} \label{eq:mixing}
 \lim_{r \to \infty} \theta(\{0,r\}) = 2,
\end{equation}
and $X$ is ergodic if and only if
\begin{equation} \label{eq:ergodicity}
 \lim_{r \to \infty} \frac 1 r \sum\nolimits_{k=1}^r \theta(\{0,k\}) = 2,
\end{equation}
respectively. 

Now, we transfer these results to a max-stable process $X$ with $\ell^p$ norm
based representation \eqref{eq:p-repr} giving necessary and sufficient 
conditions in terms of $W^{(p)}$. For the associated denoised process
$\overline{X}$, Equations \eqref{eq:stationarity}--\eqref{eq:ergodicity} depend
on the distribution $W^{(p)} = V$ only, while the structure of the process
$X$ is more difficult as we have $V(\cdot) = [\Gamma(1-p^{-1})]^{-1} U^{(p)}(\cdot) W^{(p)}(\cdot)$
(cf.\ Proposition \ref{prop:trafo-repr}). The following result, however, shows
that those conditions simplify to the conditions for the associated denoised 
process $\overline{X}$.

\begin{proposition} \label{prop:prop}
 Let $\{X(s), \, s \in \ZZ\}$ be a simple max-stable process with $\ell^p$ norm
 based representation \eqref{eq:p-repr} and let $\overline{X}$ be the denoised
 process associated to $X$. Then, the following holds:
 \begin{enumerate}
  \item[1.] $X$ is stationary if and only if $\overline{X}$ is stationary.
 \end{enumerate}
 If $X$ is stationary, we further have
 \begin{enumerate} 
  \item[2.] $X$ is mixing if and only if $\overline{X}$ is mixing.
  \item[3.] $X$ is ergodic if and only if $\overline{X}$ is ergodic.
 \end{enumerate}
\end{proposition}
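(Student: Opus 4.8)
The plan is to establish the stationarity claim first and then deduce mixing and ergodicity from it. For part~1, I would apply the criterion \eqref{eq:stationarity} of \citet{KSH09} to the spectral functions of $X$ and of $\overline X$. By Proposition~\ref{prop:trafo-repr}, a spectral function of $X$ is $V(\cdot)=_d U^{(p)}(\cdot)W^{(p)}(\cdot)/\Gamma(1-p^{-1})$, whereas $W^{(p)}$ itself is a spectral function of $\overline X$. For pairwise distinct $s_1,\dots,s_n$ (to which the general case reduces by merging coinciding indices) and $u_1,\dots,u_n\in[0,1]$ with $\sum_{i=1}^n u_i=1$, independence of the noise from $W^{(p)}$ gives
\[
 \EE\Bigl\{\textstyle\prod_{i=1}^n V(s_i)^{u_i}\Bigr\}
 =\frac{1}{\Gamma(1-p^{-1})}\,\EE\Bigl\{\textstyle\prod_{i=1}^n U^{(p)}(s_i)^{u_i}\Bigr\}\,\EE\Bigl\{\textstyle\prod_{i=1}^n W^{(p)}(s_i)^{u_i}\Bigr\}.
\]
The decisive point is that, since the $U^{(p)}(s_i)$ are i.i.d.\ $\Phi_p$, the middle factor equals $\prod_{i=1}^n\Gamma(1-u_i/p)$, a quantity depending only on the exponents and not on the locations, hence invariant under the shift $s_i\mapsto s_i+s$. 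This location-independent prefactor therefore cancels in \eqref{eq:stationarity}, so the criterion holds for $V$ precisely when it holds for $W^{(p)}$, which is the asserted equivalence.

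For parts~2 and~3 I would assume $X$ stationary, so that $\overline X$ is stationary by part~1, and invoke the characterizations \eqref{eq:mixing} and \eqref{eq:ergodicity} of \citet{kabluchko-schlather-10} in terms of the pairwise extremal coefficients. Writing $\theta_r=\theta(\{0,r\})$ for the pairwise extremal coefficient of $X$ and $\bar\theta_r=\EE(\max\{W^{(p)}(0),W^{(p)}(r)\})$ for that of $\overline X$, Proposition~\ref{prop:ec-bounds} supplies the two-sided bound $\bar\theta_r\le\theta_r\le 2^{1/p}\bar\theta_r^{1-p^{-1}}$, both sides of which equal $2$ when $\bar\theta_r=2$. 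Since every pairwise extremal coefficient lies in $[1,2]$, the mixing equivalence follows by squeezing: if $\bar\theta_r\to2$ then $\bar\theta_r\le\theta_r\le2$ forces $\theta_r\to2$, and conversely rearranging the upper bound to $\bar\theta_r\ge(2^{-1/p}\theta_r)^{p/(p-1)}$ together with $\bar\theta_r\le2$ forces $\bar\theta_r\to2$ whenever $\theta_r\to2$.

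For ergodicity I would average these pointwise bounds over $k=1,\dots,r$. The lower bound averages immediately, while for the upper bound the concavity of $t\mapsto t^{1-p^{-1}}$ on $[0,\infty)$ lets Jensen's inequality turn $\frac1r\sum_{k=1}^r\bar\theta_k^{1-p^{-1}}$ into $\bigl(\frac1r\sum_{k=1}^r\bar\theta_k\bigr)^{1-p^{-1}}$. The Ces\`aro averages $C_r=\frac1r\sum_{k=1}^r\theta_k$ and $\bar C_r=\frac1r\sum_{k=1}^r\bar\theta_k$ thus obey the same two-sided bound $\bar C_r\le C_r\le 2^{1/p}\bar C_r^{1-p^{-1}}$ as the coefficients themselves, and the squeeze argument from the mixing case carries over verbatim to establish \eqref{eq:ergodicity}. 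The main obstacle throughout is ensuring that convergence to the specific value $2$ is preserved in both directions of each equivalence; this rests on the two bounds of Proposition~\ref{prop:ec-bounds} being tight exactly at $2$, and, in the ergodic case, on correctly passing the averaging through the concave exponent via Jensen's inequality.
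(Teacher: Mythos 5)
Your proposal is correct and takes essentially the same route as the paper's own proof: the same factorization of the moment criterion \eqref{eq:stationarity}, with the location-independent prefactor $\prod_{i=1}^n \Gamma(1-u_i p^{-1})$ cancelling on both sides, and the same two-sided bounds from Proposition~\ref{prop:ec-bounds} fed into the characterizations \eqref{eq:mixing} and \eqref{eq:ergodicity}, using Jensen's inequality for the Ces\`aro averages in the ergodic case. You merely make explicit what the paper leaves terse, namely the converse direction of the squeeze via $\bar\theta_r \geq \bigl(2^{-1/p}\theta_r\bigr)^{p/(p-1)}$ and the tightness of both bounds exactly at the value $2$.
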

\begin{proof}
 \begin{enumerate}
  \item[1.]  By \citet{KSH09} and Proposition \ref{prop:trafo-repr}, the
    process $X$ is stationary if and only if \eqref{eq:stationarity} holds for 
    $V(\cdot) = [\Gamma(1-p^{-1})]^{-1} U^{(p)}(\cdot) W^{(p)}(\cdot)$. The 
    left-hand side of \eqref{eq:stationarity} equals
    \begin{align*}
     \EE\left\{ V(s_1)^{u_1} \cdot \ldots \cdot V(s_n)^{u_n} \right\} 
    ={}& \frac 1 {\Gamma(1-p^{-1})} \EE\left\{\prod\nolimits_{i=1}^n U^{(p)}(s_i)^{u_i} W^{(p)}(s_i)^{u_i}\right\}\\
    ={}& \frac 1 {\Gamma(1-p^{-1})} \EE\left\{\prod\nolimits_{i=1}^n U^{(p)}(s_i)^{u_i}\right\} \EE\left\{ \prod\nolimits_{i=1}^n  W^{(p)}(s_i)^{u_i}\right\}\\
    ={}& \frac {\prod_{i=1}^n \Gamma(1-u_i p^{-1})} {\Gamma(1-p^{-1})} \EE\left\{\prod\nolimits_{i=1}^n  W^{(p)}(s_i)^{u_i}\right\},
    \end{align*}
    where we used the fact that $U^{(p)}(s_i)^{u_i}$, $i=1,\ldots,n$, are 
    independent $\Phi_{p/u_i}$ random variables. Thus, $X$ is stationary if and
    only if Equation \eqref{eq:stationarity} holds for $V=W^{(p)}$, i.e.\ if
    and only if $\overline{X}$ is stationary.
  \item[2.]  By \citet{kabluchko-schlather-10}, the process $X$ is mixing if 
    and only if Equation \eqref{eq:mixing} holds. where $\theta$
    denotes the pairwise extremal coefficient of $X$. Proposition 
    \ref{prop:ec-bounds} yields the bounds
    \begin{align*}
    \lim_{r \to \infty} \EE(\max\{W^{(p)}(0),W^{(p)}(r)\}) \leq{}& \lim_{r \to \infty} \theta(\{0,r\})\\
                  \leq{}& 2^{1/p} \lim_{r \to \infty} \left[\EE(\max\{W^{(p)}(0),W^{(p)}(r)\})\right]^{1-p^{-1}} \leq 2.
    \end{align*}
    Thus, $ \lim_{r \to \infty} \theta(\{0,r\}) =2$ if and only if
    $\lim_{r \to \infty} \EE(\max\{W^{(p)}(s_1),W^{(p)}(s_2)\})=2$ which is equivalent to
    the fact that $\overline{X}$ is mixing as $\EE(\max\{W^{(p)}(0),W^{(p)}(r)\})$ is the
    extremal coefficient of $\overline{X}$.
  \item[3.] The proof runs analogously to the proof of the second assertion.
    The process $X$ is ergodic if and only if Equation \eqref{eq:ergodicity}
    holds. From Proposition \ref{prop:ec-bounds} and Jensen's inequality, we
    obtain
    \begin{align*}
            & \phantom{2^{1/p}} \lim_{r \to \infty} r^{-1} \sum\nolimits_{k=1}^r \EE(\max\{W^{(p)}(0),W^{(p)}(k)\}) 
    {}\leq{} \lim_{r \to \infty} r^{-1} \sum\nolimits_{k=1}^r \theta(\{0,k\})\\
      \leq{}& 2^{1/p} \lim_{r \to \infty} r^{-1} \sum\nolimits_{k=1}^r \left[\EE(\max\{W^{(p)}(0),W^{(p)}(k)\})\right]^{1-p^{-1}}\\
      \leq{}& 2^{1/p} \lim_{r \to \infty} \left[r^{-1} \sum\nolimits_{k=1}^r \EE(\max\{W^{(p)}(0),W^{(p)}(k)\})\right]^{1-p^{-1}} \leq 2.
    \end{align*}
    Consequently, we have that $\lim_{r \to \infty} r^{-1} \sum_{k=1}^r \theta(\{0,k\}) = 2$
    holds true if and only if $\lim_{r \to \infty} r^{-1} \sum_{k=1}^r \EE(\max\{W^{(p)}(0),W^{(p)}(k)\})=2$
 \end{enumerate}
\end{proof}
\medskip

\begin{remark}
  The mixing properties of a stochastic process $\{X(s), \, s \in S\}$ are
  described more precisely by its mixing coefficients. For two subsets $S_1, 
  S_2 \subset S$, the $\beta$-mixing coefficient $\beta(S_1,S_2)$ is defined by
  $$ \beta(S_1,S_2) = \sup\{| \mathcal{P}_{S_1 \cup S_2}(C) - \mathcal{P}_{S_1} \otimes \mathcal{P}_{S_2}(C)|, \, C \in \mathcal{C}_{S_1 \cup S_2}\},$$ 
  where, for each $\tilde S \subset S$, the probability measure 
  $\mathcal{P}_{\tilde S}$ denotes the distribution of the restricted process
  $\{X(s), \, s \in \tilde S\}$ on the space of non-negative functions on 
  $\tilde S$ endowed with the Borel-$\sigma$ algebra $\mathcal{C}_{\tilde S}$.
  \medskip
  
  For the case of a max-stable process, \citet{DEM12} provide the upper bound
  $$\beta(S_1,S_2) \leq 4 \sum_{s_1 \in S_1} \sum_{s_2 \in S_2} [2 - \theta(s_1,s_2)].$$
  Applying Proposition \ref{prop:ec-bounds} , we obtain  
  $$\beta(S_1,S_2) \leq 4 \sum_{s_1 \in S_1} \sum_{s_2 \in S_2} [2 - \theta(s_1,s_2)] \leq 4 \sum_{s_1 \in S_1} \sum_{s_2 \in S_2} [2 - \EE(\max\{W^{(p)}(s_1),W^{(p)}(s_2)\})],$$
  i.e.\ the upper bound for a process with $\ell^p$ norm based representation
  \eqref{eq:p-repr} is lower than the bound for the associated denoised process.
\end{remark}
\medskip

As Proposition \ref{prop:prop} states, a max-stable process with $\ell^p$ norm 
based representation \eqref{eq:p-repr} shares properties such as stationary,
ergodicity and mixing with the associated denoised process. In particular, the 
``noisy'' analogues of well-studied max-stable processes might be used without
changing any of these properties.

\section*{Acknowledgements}

The author is grateful to Prof.\ Stilian Stoev and Dr.\ Kirstin Strokorb
for pointing out some connections to other work.

\section*{References}

\bibliographystyle{elsarticle-harv}
\bibliography{lit}

\end{document}